\documentclass[12pt]{amsart}
\usepackage[usenames,dvipsnames,svgnames,table]{xcolor}

\usepackage{amssymb,amsmath,url,graphicx}
\usepackage[all]{xy}
\usepackage{xcolor}
\usepackage{enumitem}

\addtolength{\textwidth}{2in}
\addtolength{\oddsidemargin}{-.95in}
\addtolength{\evensidemargin}{-.95in}

\addtolength{\textheight}{1in}
\addtolength{\topmargin}{-.5in}

\setlength{\parskip}{6pt}

\newif\ifdebug                                                                 %
\debugfalse                                                   

%

%
%
\newcommand    {\ynote}[1]   {\ifdebug {{\color{olive}{{#1}}}} \fi}

\newcommand{\printname}[1]
    {\smash{\makebox[0pt]{\hspace{-1.0in}\raisebox{-4pt}{\tiny #1}}}}
\newcommand{\labell}[1] {\ifdebug {\label{#1}\printname{#1}}
                        \else    {\label{#1}} \fi}

\swapnumbers
\numberwithin{equation}{section}
\newtheorem {Theorem}[equation]         {Theorem}

\newtheorem {Proposition}  [equation]   {Proposition}
\newtheorem {thm}[equation]         {Theorem}
\newtheorem*{thm-nn}{Theorem}  
\newtheorem {lemma}[equation]           {Lemma}
\newtheorem {corollary} [equation]      {Corollary}
\newtheorem {prop}  [equation]   {Proposition}

%
\theoremstyle{definition}

\newtheorem{defn}[equation]{Definition}

\theoremstyle{remark}
\newtheorem{Remark}[equation]{Remark}
\newtheorem{rmk}[equation]{Remark}
\newtheorem*{Remark*}{Remark}
\newtheorem*{rmk*}{Remark}
\newtheorem{Example}[equation]{Example}

\def\eor{\unskip\ \hglue0mm\hfill$\diamond$\smallskip\goodbreak}

\DeclareMathOperator{\RR}{RR} 
\def \BS {\text{BS}}
 
\DeclareMathOperator{\Todd}{Todd}
\DeclareMathOperator{\vol}{vol}
\DeclareMathOperator{\hol}{hol}
\DeclareMathOperator{\av}{av}

\DeclareMathOperator{\Aut}{Aut}
\DeclareMathOperator{\PD}{PD}
\DeclareMathOperator{\Hom}{Hom}
\DeclareMathOperator{\GL}{GL}

\def \aff {{\operatorname{aff}}}

\def \dR {{\operatorname{dR}}}
\DeclareMathOperator{\Inv}{inv}

\newcommand{\incl}{\iota}  

\newcommand{\C}{\mathbb{C}}
\newcommand{\R}{\mathbb{R}}

\newcommand{\Z}{\mathbb{Z}}
\newcommand{\T}{\mathbb{T}}

\newcommand{\st}{\mid}  
\def \Lt {{\mathfrak t}}

\newcommand{\del}{\partial}

\newcommand{\cross}{\times}
\newcommand{\abs}[1]{\lvert#1\rvert}

\newcommand{\tensor}{\otimes}

\newcommand{\dd}[2]{\frac{\del #1}{\del #2}}   

\newcommand{\inv}{^{-1}} 

\newcommand{\into}{\hookrightarrow}

\newcommand{\define}[1]{\textsl{#1}}
\newcommand{\iia}{integral-integral affine}  

\def \ol {\overline}

%
\DeclareFontFamily{U}{MnSymbolC}{}
\DeclareSymbolFont{MnSyC}{U}{MnSymbolC}{m}{n}
\DeclareFontShape{U}{MnSymbolC}{m}{n}{
    <-6>  MnSymbolC5
   <6-7>  MnSymbolC6
   <7-8>  MnSymbolC7
   <8-9>  MnSymbolC8
   <9-10> MnSymbolC9
  <10-12> MnSymbolC10
  <12->   MnSymbolC12}{}
\DeclareMathSymbol{\contract}{\mathbin}{MnSyC}{'270}

\begin{document}

\title[Integral-integral affine geometry and Riemann--Roch] 
{Integral-integral affine geometry, geometric quantization, and
Riemann--Roch}

\author{Mark D. Hamilton}
\address{Department of Mathematics and Computer Science,
  Mount Allison University, Sackville, NB, Canada}
\email{mhamilton@mta.ca}

\author{Yael Karshon}
\address{School of Mathematical Sciences, Tel-Aviv University, Tel-Aviv, Israel,
and Department of Mathematics, University of Toronto, Toronto, ON, Canada}
\email{yaelkarshon@tauex.tau.ac.il, karshon@math.toronto.edu}

\author{Takahiko Yoshida}
\address{Department of Mathematics, School of Science and Technology, 
Meiji University, Tokyo, Japan}
\email{takahiko@meiji.ac.jp}

\date{\today}


\keywords{Lagrangian fibration, independence of polarization,
Bohr--Sommerfeld, integral affine structure}

\thanks{2020 {\it Mathematics Subject Classification}. Primary 53D50, 53C15.}

\begin{abstract}
We give a simple proof that, for a pre-quantized compact symplectic
manifold with a Lagrangian torus fibration, its Riemann--Roch number
coincides with its number of Bohr--Sommerfeld fibres.
This can be viewed as an instance of the ``independence
of polarization" phenomenon of geometric quantization.
The base space for such a fibration acquires a so-called
integral-integral affine structure.
The proof uses the following simple fact, whose proof is
trickier than we expected:  on a compact integral-integral affine manifold,
the total volume is equal to the number of integer points.
\end{abstract}

\maketitle

\setcounter{tocdepth}{1}
\tableofcontents

\ynote{2020 math subj class}

\section{Introduction}
\labell{sec:intro}

The context of this paper lies within geometric quantization.
``Quantization'' is the art of getting from a mathematical model 
for a classical mechanical system 
to a mathematical model for a corresponding quantum mechanical system;
geometric quantization does so using the geometry of the classical system.  
Recipes for geometric quantization rely on additional auxiliary structure,
called a \emph{polarization,}
which is heuristically a ``choice of half the variables,''
analogous to how classical phase space has both position and momentum coordinates,
but quantum ``wave functions'' are functions of position only. 
A central theme in geometric quantization, 
of which there is ample evidence and only partial understanding,
is ``independence of polarization,''
by which certain features of the quantized space
are independent of the choice of polarization.

The following result
can be viewed as an instance of this ``independence of polarization''
phenomenon
(see Section~\ref{sec:defns} for details and definitions):
\begin{quotation}
Let $(M,\omega)$ be a compact symplectic manifold,
let $\pi \colon M \to B$ be a (regular) Lagrangian fibration,
and let $L \to M$ be a prequantization line bundle.
Then the Riemann--Roch number of $(M,\omega)$ is equal
to the number of Bohr--Sommerfeld fibres of $L \to M \to B$.
\end{quotation}
This result is Theorem \ref{thm:RR=BS} below, 
to which we refer as the ``Upstairs Theorem''.
In principle, this result has been known to experts for many years;
it appeared as Corollary~6.12 
in the paper~\cite{fujita-furuta-yoshida} by Fujita, Furuta, and Yoshida, 
who in turn attribute it to Andersen's paper~\cite{andersen}.
In both cases, the proof passes through a more complicated theorem, and
our original purpose in writing this paper
was to give a simple, direct proof of this result.
(For more details on the relation of our work with the existing 
literature, see Section~\ref{sec:literature}.)

Our proof of the Upstairs Theorem 
relies on the following result from ``integral-integral affine geometry''
(see Section~\ref{sec:downstairs} for details and definitions),  
which on the surface has nothing to do with geometric quantization:
\begin{quotation}
Let $B$ be a compact integral-integral affine manifold.
Then the volume of $B$ is equal to the number of integral points in $B$.
\end{quotation}
This result is Theorem~\ref{thm:volB=BZ} below, 
to which we refer as the ``Downstairs Theorem''.

We were not able to find this result in the literature,
and in trying to write down a simple proof, we were surprised to find 
that it was trickier than expected.
For a while, we toyed with the idea of flipping the logic:
deducing the Downstairs Theorem from the Upstairs Theorem
and changing the title of our paper to 
``An inappropriate proof, using geometric quantization,
of a simple fact from integral-integral affine geometry.''
Eventually, it turned out that we do not need geometric quantization
to prove the Downstairs Theorem.
But we do use Lagrangian torus fibrations, and a 
not-entirely-obvious cohomological argument 
(see Theorems~\ref{thm:invariant-representative} and \ref{p:poincare}).
These arguments may be known to experts
(see Section~\ref{sec:literature}),
but we were not able to find an explicit proof in the literature.

One thing that makes the Downstairs Theorem interesting 
is that, although the base $B$ is compact,
we do not know if its affine connection is geodesically complete.
This is a special case of the Markus conjecture;
see \S\ref{sec:downstairs}.

An alternative approach to the Downstairs Theorem, 
suggested to us by Yiannis Loizides, 
is to decompose $B$ into simple integral convex polytopes $B_i$
and to apply to each $B_i$ the exact Euler-MacLaurin formula of \cite{KSW}.
Such an approach might not be simpler than ours,
but it may lead to a more general result, 
in which we allow $\pi \colon M \to B$ to be a completely integrable
system with elliptic (``locally toric'') singularities
over a manifold-with-corners $B$.
Such systems are studied in~\cite{mol} and~\cite{fernandes-mol}.

The organization of this paper is as follows. 
In Section~\ref{sec:defns}, we state our Upstairs Theorem
as Theorem~\ref{thm:RR=BS} and give some necessary definitions.  
In Section~\ref{sec:downstairs}, 
we recall facts about integral affine structures 
and define \emph{integral-integral affine structures,}
leading to the statement of our Downstairs Theorem 
in Theorem~\ref{thm:volB=BZ}.
In Section~\ref{sec:defns 2},
we recall how a Lagrangian torus fibration 
induces an integral affine structure on its base,
and we show --- using ``enhanced Arnol'd--Liouville charts'' --- 
how a prequantized Lagrangian torus fibration 
induces an integral-integral affine structure on its base.
In Section~\ref{sec:RR=BS}, we give our short proof of 
the Upstairs Theorem modulo the Downstairs Theorem.
In Section~\ref{sec:lie-torus-bundles},
we describe the integral lattice in $TB$ and period lattice in $T^*B$
when $B$ is integral affine,
and the affine lattice in $TB$ when $B$ is integral-integral affine. 
In Section~\ref{sec:local torus actions}, 
we introduce \emph{local torus actions,}
and we show that, with a local torus action,
every de Rham cohomology class is represented 
by an invariant form.
Finally, in Section~\ref{sec:pf-volB=BZ} we use local torus actions
and the aforementioned cohomological argument 
to prove the Downstairs Theorem.
Section~\ref{sec:dual-torus-fibration}
contains a discussion of dual torus fibrations that is not necessary 
for our proof but that provides geometric context.
And in Section~\ref{sec:literature} we comment on the connection
of our work to some existing literature.

\smallskip

Here is a list of the two main theorems,
and the lemmas that allow us to deduce the Upstairs Theorem
from the Downstairs Theorem.

\begin{center}
\renewcommand{\arraystretch}{1.5}
\begin{tabular}{|lc|}
\hline
Theorem~\ref{thm:RR=BS} (Upstairs): & $\RR(M)=|\BS|$ \\ \hline
Theorem~\ref{thm:volB=BZ} (Downstairs): & $\vol(B)=|B_\Z|$ \\ \hline \hline
Lemma~\ref{lemma:volM=volB}: & $\vol(M)=\vol(B)$ \\ \hline
Lemma~\ref{lemma:prequant-implies-IIA}: & $\BS = B_\Z$ \\ \hline
Lemma \ref{lemma:RR=volM}: & $\RR(M)=\vol(M)$ \\ \hline
\end{tabular}
\end{center}

\medskip


\subsection*{Acknowledgements}
We are grateful for discussions with Bill Goldman,  
Yiannis Loizides, 
Oliver Goertches, 
Alex Lubotzky, 
Yuichi Nohara, 
and Louis Ioos. 

Yael Karshon and Mark Hamilton acknowledge the support
of the Natural Sciences and Engineering Research Council of Canada (NSERC).
Yael Karshon's research is also partly funded by the 
United-States -- Israel Binational Science Foundation.
Takahiko Yoshida's work is supported by Grant-in-Aid for Scientific
Research (C) 15K04857 and 19K03479.

\section{Upstairs --- geometric quantization}
\labell{sec:defns}

In this section we give the definitions necessary to state our 
``Upstairs Theorem,'' as well as provide some context, 
and state the theorem as Theorem~\ref{thm:RR=BS}.
We prove this theorem in Section~\ref{sec:RR=BS}, assuming 
our ``Downstairs Theorem,'' which we state in Section~\ref{sec:downstairs}
and prove in Section~\ref{sec:pf-volB=BZ}.

For our purposes, geometric quantization is a recipe that associates
to a compact symplectic manifold $(M,\omega)$
a finite dimensional vector space $\mathcal{Q}(M)$
(or more generally a virtual vector space, i.e.,
a formal difference of two vector spaces),
which is constructed from sections of a particular complex line bundle
$L\to M$.  
One part of the recipe of geometric quantization is a ``polarization,''
of which the most commonly considered types are as follows.
A \define{K\"ahler polarization} is 
given by a compatible complex structure on $M$,
which makes $L$ into a holomorphic line bundle; 
then $\mathcal{Q}(M)$ is the space of holomorphic sections of $L$.\footnote{
  Heuristically, we can think of a holomorphic section as a section that
  ``depends on  $z$  and not on  $\ol{z}$.''}
A \define{real polarization} is given by a foliation of $M$
into Lagrangian submanifolds,
usually assumed to be the fibres of a fibration  $\pi \colon M \to B$.
In this case, $\mathcal{Q}(M)$ is constructed from sections
of $L \to M$ that are covariant constant along the fibres of $\pi$, 
and can typically be described in terms of
\emph{Bohr--Sommerfeld fibres,} defined further below.\footnote{
  Heuristically, we can think of a section that is covariant constant
  along the fibres of $\pi$ as a section that
  ``depends on the base variables and not on the fibre variables.''}
One natural question that arises is 
``independence of polarization'': 
Given two polarizations on the same $M$, are the resulting quantizations
the same?

\subsection*{Riemann--Roch numbers}\  

Recall that the Todd class associates to each rank $n$ complex vector bundle
$E \to M$ a cohomology class of mixed degree on $M$,
associated through the Chern--Weyl recipe to the Taylor series at $0$
of the function $ \prod_{i=1}^n \frac{x_i}{1-e^{-x_i}}$.
Given a compact symplectic manifold $(M,\omega)$, we define its
\define{Riemann--Roch number} by
\begin{equation}\labell{eq:RR-defn}
  \RR(M,\omega) = \int_M \exp(\omega) \Todd(TM,J),
\end{equation}
where $J$ is any compatible almost complex structure.
(Different choices of compatible almost complex structures~$J$
give isomorphic complex vector bundles $(TM,J)$,
hence the same Todd class.)

We can interpret the Riemann--Roch number as the dimension of a
quantization of $(M,\omega)$.  
When $\omega$ is the curvature of a complex Hermitian line bundle $L \to M$,
we can declare the quantization space to be the virtual vector space
obtained as the formal difference of the kernel and cokernel 
of the corresponding Dirac--Dolbeault operator.
See, e.g., Duistermaat's book \cite[Chapter 15]{duistermaat}.
The Riemann--Roch number then computes the index of this operator,
i.e., the difference of the dimensions of the kernel and cokernel;
in particular, it's an integer.
In the presence of a compatible complex structure on $M$,
the line bundle $L \to M$ becomes holomorphic,
and this virtual vector space 
coincides with the alternating sum of the cohomologies
of the sheaf of holomorphic sections of $L$.
When the higher cohomology vanishes
(such as when we pass to $L^{\otimes k}$ for sufficiently large $k$),
we are left with the space of holomorphic sections,
and in this case the Riemann--Roch number gives the dimension of the space of
holomorphic sections, which can be viewed as the dimension of
the quantization $\mathcal{Q}(M)$.

\subsection*{Bohr--Sommerfeld sets}\ 

Let $(M,\omega)$ be a symplectic manifold,
and let $L \to M$ be a prequantization line bundle
with connection,
i.e., $L$ is a complex Hermitian line bundle,
equipped with a connection whose curvature is $\omega$.\footnote{
  There are various different conventions in the literature for the curvature
  of a prequantization line bundle: often it includes factors of
  $i$, $2\pi$, and/or $\hbar$.
  See Remark~\ref{rmk:conventions}.}
For any Lagrangian submanifold $N$ of $M$, the pullback of $L$ to $N$
is a line bundle with a flat connection.
The Lagrangian $N$ is \define{Bohr--Sommerfeld}
if this connection is trivializable.
Equivalently, $N$ is Bohr--Sommerfeld 
if the holonomy of the prequantization
connection is trivial around every loop in $N$.
Given a Lagrangian fibration $\pi \colon M \to B$,
the corresponding \define{Bohr--Sommerfeld points} 
are those points of $B$ whose preimage in $M$ is a Bohr--Sommerfeld Lagrangian;
the set of such points is the \define{Bohr--Sommerfeld set},
which we denote $\BS$.

In this case the quantization of $M$
with respect to the real polarization defined by the Lagrangian fibration $\pi$
is constructed from sections
of $L\to M$ that are covariant constant along the fibres of $\pi$.
There are several recipes for doing so; as one example,
\'Sniatycki~\cite{sniatycki} constructs this quantization 
as a cohomology group of the {sheaf} of such sections.
(See~\S3 of~\cite{hamilton-monod} for a fuller discussion of quantization
using a real polarization.)
Typically, we can interpret the Bohr--Sommerfeld set as 
indexing a set of basis elements of a quantization of $(M,\omega)$, so that 
the number of Bohr--Sommerfeld points in~$B$ 
gives the dimension of the quantization of $M$. 
For the purposes of this paper, 
we \emph{define} the dimension of the Bohr--Sommerfeld quantization in this way.

The main theorem of this paper is the following:
\begin{thm}[``Upstairs theorem'']
\labell{thm:RR=BS}
Let $(M,\omega)$ be a compact symplectic manifold,
let $\pi \colon M \to B$ be a (regular) Lagrangian fibration,
and let $L \to M$ be a prequantization line bundle.
Then the Riemann--Roch number of $M$ is equal to the number of
Bohr--Sommerfeld fibres of $L \to M \to B$:
\[   \RR(M,\omega) = \abs{\BS\,}. \]         
\end{thm}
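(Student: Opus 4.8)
The plan is to transport the problem from the total space $M$ down to the base $B$, where it becomes a purely affine-geometric statement, and to assemble the result from a chain of four equalities,
\[ \RR(M,\omega) = \vol(M) = \vol(B) = |B_\Z| = |\BS|. \]
Here the first two equalities are differential-geometric consequences of the fibration being Lagrangian, the middle equality is the Downstairs Theorem (Theorem~\ref{thm:volB=BZ}), and the last equality identifies the Bohr--Sommerfeld set with the integral points of the integral-integral affine structure that the prequantization induces on $B$. Thus almost all of the work is in setting up the two endpoints correctly; the genuinely hard analytic core is quarantined into the Downstairs Theorem.

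For $\RR(M,\omega)=\vol(M)$ (Lemma~\ref{lemma:RR=volM}), the key observation is that, for a compatible almost complex structure $J$, the vertical bundle $\ker d\pi$ is Lagrangian, so $(TM,J)$ is isomorphic, as a complex vector bundle, to the complexification of a bundle pulled back from $B$ (the vertical cotangent directions). Consequently $\Todd(TM,J)=\pi^*\alpha$ for some even-degree class $\alpha$ on $B$ with constant term $1$. Expanding $\int_M \exp(\omega)\,\pi^*\alpha$ and applying the projection formula, every term involving a positive-degree component of $\alpha$ forces the fibrewise integration of a power $\omega^j$ with $j<n$; but since the $n$-dimensional fibres are Lagrangian, $\omega$ restricts to zero on them, so those fibre integrals vanish. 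Only the degree-zero part of $\alpha$ survives, leaving $\int_M \omega^n/n! = \vol(M)$. The equality $\vol(M)=\vol(B)$ (Lemma~\ref{lemma:volM=volB}) is then simply the statement that pushing the Liouville form $\omega^n/n!$ forward along $\pi$ yields the affine volume form on $B$, with the period lattice on each fibre normalising its volume to one.

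The identity $|B_\Z|=|\BS|$ comes from the construction in Section~\ref{sec:defns 2}: the prequantization line bundle enhances the integral affine structure on $B$ to an integral-integral affine one whose integral points are, by design, exactly the $b\in B$ for which the holonomy of the prequantization connection around every loop in $\pi^{-1}(b)$ is trivial --- that is, the Bohr--Sommerfeld points --- so that $\BS=B_\Z$. With these links in place, the only remaining input is the middle equality $\vol(B)=|B_\Z|$, and this is where I expect the real difficulty to lie. Unlike the upstairs steps, which are local consequences of action--angle coordinates, the Downstairs Theorem is a global statement about a compact integral-integral affine manifold whose affine connection is not even known to be geodesically complete; proving it (as carried out in Section~\ref{sec:pf-volB=BZ}) requires local torus actions and an invariant-representative cohomological argument rather than any direct volume computation.
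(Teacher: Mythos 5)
Your proposal is correct and has the same overall architecture as the paper's proof: the identical chain $\RR(M,\omega)=\vol(M)=\vol(B)=\abs{B_\Z}=\abs{\BS}$, with the last three links supplied by Lemma~\ref{lemma:volM=volB}, Theorem~\ref{thm:volB=BZ}, and Lemma~\ref{lemma:prequant-implies-IIA}, exactly as in Section~\ref{sec:RR=BS}. Where you genuinely diverge is in the first link, $\RR(M,\omega)=\vol(M)$ (Lemma~\ref{lemma:RR=volM}). Both you and the paper start from the complex vector bundle isomorphism $(TM,J)\cong\pi^*(T^*B\otimes\C)$, but the paper then uses the integral affine structure on $B$: the induced flat connection on $T^*B$ pulls back to a flat connection on $(TM,J)$, so its real Chern classes vanish and $\Todd(TM,J)=1$. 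You instead use only naturality, $\Todd(TM,J)=\pi^*\Todd(T^*B\otimes\C)$, and then kill every positive-degree component of the Todd class by pairing against powers of $\omega$. Your route is more elementary at this step --- it never needs the flat connection, only that the Todd class is pulled back from the base --- while the paper's route proves the stronger statement that the Todd class itself is trivial in cohomology. One caveat about your phrasing: fibrewise integration and the projection formula require a coherent orientation of the fibres, and this can genuinely fail, since for a Lagrangian fibration over a non-orientable base (e.g.\ the integral affine Klein bottle) the vertical bundle $\ker d\pi\cong\pi^*T^*B$ is non-orientable; note that the paper's proof of the Downstairs Theorem explicitly passes to an orientation double cover for a related reason. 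This is harmless here, because the vanishing you need is pointwise rather than integral: writing $\alpha_j$ for the degree-$2j$ part of $\alpha$, since the fibres are Lagrangian each factor of $\omega$ carries at most one vertical degree, so for $j>0$ the $2n$-form $\dfrac{\omega^{n-j}}{(n-j)!}\wedge\pi^*\alpha_j$ has horizontal degree at least $n+j>n=\dim B$ and is therefore identically zero. With that rephrasing (or by passing to the oriented double cover and dividing both sides by two), your argument is complete.
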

Theorem~\ref{thm:RR=BS} can be viewed as an instance 
of ``independence of polarization,''
in the sense that 
the vector space that is associated to $L \to M$
through Dirac--Dolbeault quantization has the same dimension
as the vector space that is associated to $L \to M$
through Bohr--Sommerfeld quantization.
In Section~\ref{sec:RR=BS} we will reduce Theorem~\ref{thm:RR=BS} 
to Theorem~\ref{thm:volB=BZ}; 
we will prove Theorem~\ref{thm:volB=BZ} in Section~\ref{sec:pf-volB=BZ}.

One corollary of the arguments leading to Theorem~\ref{thm:RR=BS} is that 
different Lagrangian torus fibrations on a symplectic manifold $M$
have the same number of Bohr--Sommerfeld points:

\begin{corollary} \labell{corollary to upstairs}
  Let $(M,\omega)$ be a compact symplectic manifold,
  let $L \to M$ be a prequantization line bundle, 
  and let $\pi_1 \colon M \to B_1$ and  $\pi_2 \colon M \to B_2$
  be Lagrangian fibrations,
  with corresponding Bohr--Sommerfeld sets $\BS_1$ and $\BS_2$.
  Then $\abs{\BS_1} = \abs{\BS_2}$.
\end{corollary}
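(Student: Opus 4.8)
The plan is to deduce Corollary~\ref{corollary to upstairs} directly from the Upstairs Theorem. The key observation is that the Riemann--Roch number $\RR(M,\omega)$, as defined in~\eqref{eq:RR-defn}, depends only on the symplectic manifold $(M,\omega)$ and not on any choice of Lagrangian fibration: the integrand $\exp(\omega)\Todd(TM,J)$ involves only the symplectic form and the Todd class of the tangent bundle (with respect to a compatible almost complex structure $J$, the choice of which does not matter). In particular, the same $(M,\omega)$ equipped with the fibration $\pi_1$ and with the fibration $\pi_2$ has one and the same Riemann--Roch number.

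The argument then proceeds as follows. First I would apply Theorem~\ref{thm:RR=BS} to the Lagrangian fibration $\pi_1 \colon M \to B_1$ together with the prequantization line bundle $L \to M$; this gives $\RR(M,\omega) = \abs{\BS_1}$. Next I would apply the same theorem to the fibration $\pi_2 \colon M \to B_2$ with the \emph{same} line bundle $L$, obtaining $\RR(M,\omega) = \abs{\BS_2}$. Since the left-hand sides are literally the same integer, combining the two equalities yields
\[
  \abs{\BS_1} = \RR(M,\omega) = \abs{\BS_2},
\]
which is exactly the claimed conclusion.

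In this sense the corollary is essentially immediate once the Upstairs Theorem is available, and the only point that requires a moment's care is the hypothesis check for each application: one must verify that $(M,\omega)$, $\pi_i$, and $L$ satisfy the standing assumptions of Theorem~\ref{thm:RR=BS} --- namely that $M$ is compact, each $\pi_i$ is a regular Lagrangian fibration, and $L$ is a prequantization line bundle for $\omega$. These are all part of the corollary's own hypotheses, so no additional work is needed. I do not anticipate any genuine obstacle here; the content of the corollary lies entirely in the Upstairs Theorem, and the corollary simply records the polarization-independence of the common value $\RR(M,\omega)$ at the level of Bohr--Sommerfeld counts.
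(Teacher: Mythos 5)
Your proof is correct: since the Riemann--Roch number as defined in~\eqref{eq:RR-defn} depends only on $(M,\omega)$ and not on any fibration, applying Theorem~\ref{thm:RR=BS} once to $(\pi_1, L)$ and once to $(\pi_2, L)$ gives $\abs{\BS_1} = \RR(M,\omega) = \abs{\BS_2}$, and the hypotheses of the theorem are exactly those of the corollary. The paper takes a slightly different route: rather than invoking the Upstairs Theorem as a black box, it cites only the ingredients of that theorem's proof that are actually needed --- Lemma~\ref{lemma:volM=volB} (so $\vol(M)=\vol(B_i)$), Lemma~\ref{lemma:prequant-implies-IIA} (so $\BS_i = (B_i)_\Z$), and Theorem~\ref{thm:volB=BZ} (so $\vol(B_i)=\abs{(B_i)_\Z}$) --- making the fibration-independent invariant through which both counts are equated $\vol(M)$ rather than $\RR(M,\omega)$. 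The logical skeleton is the same in both arguments (equate each $\abs{\BS_i}$ to a quantity intrinsic to $(M,\omega)$), but the paper's version shows the corollary is independent of Lemma~\ref{lemma:RR=volM}, i.e., of the Todd-class/almost-complex-structure computation, so it is purely a statement about volumes and lattice points; your version buys brevity and makes the ``independence of polarization'' reading explicit, at the cost of routing through the Riemann--Roch number, which is extraneous to comparing two Bohr--Sommerfeld counts. Both are complete proofs given the results established in the paper.
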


\begin{proof}
By putting together Lemma~\ref{lemma:volM=volB}, 
Lemma~\ref{lemma:prequant-implies-IIA}, and Theorem~\ref{thm:volB=BZ}, 
we see that the number of Bohr--Sommerfeld points in 
either $B_1$ or $B_2$ equals the volume of $M$.  
\end{proof}

In the context of quantization, this means that two real polarizations
on the same manifold $(M,\omega)$ will yield Bohr--Sommerfeld
quantizations of the same dimension.
For example, see the discussion of different real polarizations
on the Kodaira--Thurston manifold in~\cite{mark-zoe}.

We are grateful to Jonathan Weitsman for the following observation,
which follows by the same argument as Corollary~\ref{corollary to upstairs}:

\begin{corollary}\labell{cor:ind-of-line-bundle}
  Let $(M,\omega)$ be a compact symplectic manifold, and
  $\pi\colon M \to B$ a (regular) Lagrangian fibration. 
  Let $L_1 \to M$ and $L_2 \to M$ be prequantization line bundles with
  corresponding Bohr--Sommerfeld sets $BS_1$ and $BS_2$.
  Then $\abs{\BS_1} = \abs{\BS_2}$.
\end{corollary}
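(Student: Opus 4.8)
The plan is to mirror the proof of Corollary~\ref{corollary to upstairs}, exploiting the fact that the volume $\vol(B)$ is intrinsic to the Lagrangian fibration $\pi$ and does not see the choice of prequantization line bundle. The fibration $\pi$ is held fixed in this statement, so the only thing that varies between $L_1$ and $L_2$ is the enhancement of the integral affine structure on $B$ to an integral-integral affine structure.

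First I would recall that the integral affine structure on $B$ --- and hence the volume $\vol(B)$ --- is induced by the fibration $\pi$ alone (Section~\ref{sec:defns 2}), with no reference to any line bundle. By Lemma~\ref{lemma:volM=volB} this volume equals the symplectic volume $\vol(M)$, which is manifestly independent of $L_1$ and $L_2$. Thus there is a single number $\vol(B)=\vol(M)$ attached to the data $(M,\omega,\pi)$.

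Next, for each $i\in\{1,2\}$, I would invoke the construction behind Lemma~\ref{lemma:prequant-implies-IIA}: the prequantization line bundle $L_i$ enhances the integral affine structure on $B$ to an integral-integral affine structure, for which the set of integral points coincides with the Bohr--Sommerfeld set $\BS_i$. Applying Theorem~\ref{thm:volB=BZ} to this structure gives $\vol(B)=\abs{\BS_i\,}$. Combining this with the previous paragraph, and since $\vol(B)$ is the same in both cases, we obtain $\abs{\BS_1}=\vol(B)=\abs{\BS_2}$, as desired.

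The only point requiring care --- and the analogue of the main obstacle in Corollary~\ref{corollary to upstairs} --- is to observe that the two line bundles give rise to two possibly \emph{different} integral-integral affine structures that nevertheless share the same underlying integral affine structure, namely the one coming from $\pi$. It is the ``second'' lattice, i.e.\ the set of integral points $\BS_i$, that changes with the line bundle, whereas the volume form, and hence $\vol(B)$, is determined by the fixed integral affine structure and therefore does not change. Granting this, Theorem~\ref{thm:volB=BZ} counts the same volume $\vol(B)$ by each of $\abs{\BS_1}$ and $\abs{\BS_2}$, and the equality follows.
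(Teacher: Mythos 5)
Your proposal is correct and is essentially the paper's own argument: the paper proves this corollary by remarking that it ``follows by the same argument as Corollary~\ref{corollary to upstairs}'', i.e.\ by combining Lemma~\ref{lemma:volM=volB}, Lemma~\ref{lemma:prequant-implies-IIA}, and Theorem~\ref{thm:volB=BZ} so that each $\abs{\BS_i}$ equals $\vol(M)$. Your write-up just makes explicit the key point the paper leaves implicit, namely that both integral-integral affine structures share the underlying integral affine structure (hence the volume) determined by $\pi$ alone.
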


If a prequantizable symplectic manifold $M$ is simply-connected, then 
it has a unique prequantization line bundle up to equivalence.
(Indeed, if $L$ and $L'$ are prequantization line bundles,
then $L^{-1} \otimes L'$ is a line bundle with a flat connection,
which is trivial if $H_1(M)=0$.
See \cite[Theorem 2.2.1]{Kost}.)
%
Corollary~\ref{cor:ind-of-line-bundle} shows that in the case of Lagrangian
torus fibrations (which typically have $H_1\neq 0$),
the (dimension of the) real quantization is independent of the choice of $L$.
A similar result will hold for any quantization whose dimension is given
by the Riemann--Roch number.

We are similarly grateful to Daniele Sepe for the following observation,
which says that the quantization is unchanged by adding a ``magnetic term'':

\begin{corollary}
  Let $(M,\omega)$ be a compact symplectic manifold,
  $\pi\colon M \to B$ a (regular) Lagrangian fibration, and
  $L\to M$ a prequantization line bundle.
  Suppose $\beta$ is a closed integral 2-form on $B$, and 
  suppose $L'\to M$ is a prequantization line bundle with respect to
  $\omega + \pi^*\beta$.
  Then the Bohr--Sommerfeld sets of $L$ and $L'$ 
have the same number of points.
\end{corollary}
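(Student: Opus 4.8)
The plan is to reduce the claim to the single equality of symplectic volumes $\vol_\omega(M)=\vol_{\omega'}(M)$, where throughout I write $\omega':=\omega+\pi^*\beta$, $\dim M=2n$, and I denote by $\BS$ and $\BS'$ the Bohr--Sommerfeld sets of $L$ and $L'$ respectively. The idea is to apply, to each of the two setups separately, the chain of equalities behind Corollary~\ref{corollary to upstairs}.

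First I would check that $(M,\omega',\pi,L')$ again satisfies the hypotheses of the Upstairs Theorem~\ref{thm:RR=BS}. Since $\pi^*\beta$ annihilates every vector tangent to a fibre, the restriction of $\omega'$ to a fibre agrees with that of $\omega$ and is therefore zero; as the fibres have dimension $n=\tfrac12\dim M$, they remain Lagrangian for $\omega'$. The form $\omega'$ is closed, and it is non-degenerate: in action--angle coordinates $\omega=\sum_i dp_i\wedge d\theta_i$ one has $\omega'=\sum_i dp_i\wedge d\theta_i+\pi^*\beta$ with $\pi^*\beta$ a combination of the $dp_i\wedge dp_j$, and the mixed terms $dp_i\wedge d\theta_i$ keep the matrix representing $\omega'$ invertible for any such $\pi^*\beta$. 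Thus $\pi\colon M\to B$ is a regular Lagrangian fibration for $\omega'$ as well, and $L'$ prequantizes $\omega'$ by hypothesis, so Theorem~\ref{thm:RR=BS} and its supporting lemmas apply to both $(M,\omega,\pi,L)$ and $(M,\omega',\pi,L')$.

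Applying to each setup the combination of Lemma~\ref{lemma:prequant-implies-IIA}, Lemma~\ref{lemma:volM=volB}, and Theorem~\ref{thm:volB=BZ}, exactly as in the proof of Corollary~\ref{corollary to upstairs} (equivalently, Theorem~\ref{thm:RR=BS} together with Lemma~\ref{lemma:RR=volM}), I obtain $\abs{\BS}=\vol_\omega(M)$ and $\abs{\BS'}=\vol_{\omega'}(M)$, where $\vol_\omega(M)=\int_M\omega^n/n!$ is the Liouville volume and similarly for $\omega'$. It therefore remains to prove the single identity $\int_M(\omega')^n=\int_M\omega^n$, and I expect this to be the main obstacle.

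Here the Lagrangian hypothesis does the real work. Expanding $(\omega')^n=\sum_{k=0}^{n}\binom{n}{k}\,\omega^{\,n-k}\wedge\pi^*(\beta^k)$, I would show that every term with $k\ge 1$ integrates to zero. By the projection formula, $\int_M\omega^{\,n-k}\wedge\pi^*(\beta^k)=\int_B\pi_*(\omega^{\,n-k})\wedge\beta^k$, so it suffices to see that the fibrewise integral $\pi_*(\omega^{\,n-k})$ vanishes for $k\ge 1$. Because the fibres are Lagrangian, $\omega$ has no purely vertical component, so a product of $n-k$ copies of $\omega$ has vertical degree at most $n-k$; integration over an $n$-dimensional fibre is nonzero only on the part of vertical degree $n$, whence $\pi_*(\omega^{\,n-k})=0$ as soon as $k\ge 1$. (In action--angle coordinates this is just the observation that each monomial of $\omega^{\,n-k}\wedge\pi^*(\beta^k)$ contains at most $n-k<n$ of the angle forms $d\theta_i$, and so cannot be a volume form.) Consequently $\int_M(\omega')^n=\int_M\omega^n$, hence $\vol_{\omega'}(M)=\vol_\omega(M)$ and $\abs{\BS}=\abs{\BS'}$, as desired.
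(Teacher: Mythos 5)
Your proposal is correct, but it takes a genuinely different route from the paper's. The paper argues structurally: it first considers the particular prequantization $L'' = L \otimes \pi^* L_B$ of $(M,\omega+\pi^*\beta)$, where $L_B \to B$ is a line bundle with connection of curvature $\beta$ --- this is exactly where the integrality of $\beta$ is used, via Kostant's existence theorem \cite[Theorem 2.1.1]{Kost}; since $\pi^*L_B$ restricted to each fibre is canonically trivial and flat, the Bohr--Sommerfeld set of $L''$ is literally \emph{equal} to that of $L$ as a subset of $B$, and the case of a general $L'$ then follows by applying Corollary~\ref{cor:ind-of-line-bundle} to $L'$ and $L''$ over $(M,\omega+\pi^*\beta)$. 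You instead run the counting chain $\abs{\BS} = \abs{B_\Z} = \vol(B) = \vol(M)$ (Lemmas~\ref{lemma:prequant-implies-IIA}, \ref{lemma:volM=volB} and Theorem~\ref{thm:volB=BZ}) separately for $(M,\omega,L)$ and $(M,\omega',L')$, and then prove directly that the two Liouville volumes agree by expanding $(\omega+\pi^*\beta)^n$ and killing every cross term by fibre integration over the Lagrangian fibres. That vanishing argument is sound; in fact it can be said even more strongly: in action--angle coordinates each monomial of $\omega^{n-k}\wedge\pi^*(\beta^k)$ with $k\ge 1$ contains $n+k>n$ of the base differentials $dx_i$, so these terms vanish pointwise, not merely after integration. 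As for what each approach buys: yours is more self-contained and slightly more general in spirit --- it never invokes the integrality of $\beta$ (that hypothesis serves only to make the existence of $L'$ possible), and you explicitly verify that $\omega'$ is nondegenerate and that $\pi$ remains Lagrangian for $\omega'$, checks the paper leaves implicit when it applies Corollary~\ref{cor:ind-of-line-bundle} to the new symplectic form. The paper's tensor-product construction, on the other hand, yields the sharper conclusion that for the special choice $L''$ the Bohr--Sommerfeld sets coincide as sets rather than merely in cardinality, and it factors the general statement cleanly through an already-established corollary instead of redoing the volume comparison by hand.
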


\begin{proof}
If $L' = L \otimes \pi^* L_B$
where $L_B \to B$ is a line bundle with connection whose curvature
is $\beta$ (which exists by \cite[Theorem 2.1.1]{Kost}), 
then $L'$ is a prequantization of $(M,\omega+\pi^*\beta)$,
and its Bohr--Sommerfeld set is equal to that of $L$.
Together with Corollary~\ref{cor:ind-of-line-bundle},
we obtain the result for general $L'$.
\end{proof}

\section{Downstairs --- integral-integral affine geometry}
\labell{sec:downstairs} 

In this section, we review some facts about integral affine structures, 
and we define the notion of ``integral-integral affine structure'',
leading to the statement of our Downstairs Theorem, 
Theorem~\ref{thm:volB=BZ}.

\subsection*{Affine structures} \

An \define{affine map} on $\R^n$ is a map of the form 
$x\mapsto Ax+b$ for some $A\in \GL_n(\R)$ and $b\in \R^n$.
An \define{affine atlas} on a manifold $B$ is a collection
of charts $\phi_\alpha \colon U_\alpha \to \Omega_\alpha \subset \R^n$
whose domains $U_\alpha$ cover $B$ and whose transition maps 
$\phi_\beta \circ\phi_\alpha\inv$ are locally affine.
An \define{affine structure} is a maximal affine atlas.
Such a structure induces a torsion-free flat connection on $TB$,
by declaring the coordinate vector fields to be horizontal.
    
\subsection*{Integral affine structures} \

An \define{integral affine map} on $\R^n$ is a map of the form 
$x\mapsto Ax+b$ for some $A\in \GL_n(\Z)$ and $b\in \R^n$.
Equivalently, it is an affine map whose linear part takes the lattice $\Z^n$
onto itself.
An \define{integral affine atlas} on a manifold $B$ is a collection
of charts $\phi_\alpha \colon U_\alpha \to \Omega_\alpha \subset \R^n$ 
whose domains cover $B$ and whose transition maps 
$\phi_\beta \circ\phi_\alpha\inv$ are locally integral affine.
An \define{integral affine structure} is a maximal integral affine atlas.
Such a structure induces a smooth measure on $B$,
coming from the usual Lebesgue measure on $\R^n$.
We can then refer to the total volume, $\vol(B)$.

\subsection*{Integral-integral affine structures} \

An \define{integral-integral affine map} of $\R^n$ 
is a map of the form $x \mapsto Ax+b$ 
for some $A\in \GL_n(\Z)$ and $b\in \Z^n$.
Equivalently, it is an affine map that takes the lattice $\Z^n$ onto itself.
An \define{integral-integral affine atlas} on a manifold $B$
is a collection of charts 
$\phi_\alpha \colon U_\alpha \to \Omega_\alpha \subset \R^n$
whose domains cover $B$ and whose transition maps
$\phi_\beta \circ\phi_\alpha\inv$ are locally integral-integral affine.
An \define{integral-integral affine structure} is a maximal
integral-integral affine atlas.
Such a structure determines a subset $B_\Z$ of $B$,
defined as the set of those points $b \in B$
that are sent to $\Z^n$ by some, hence all, integral-integral charts
that contain $b$.
We call it the set of \define{integral points} in $B$.

\subsection*{Examples}\  

A prototypical example of an integral-integral affine manifold is
the quotient of $\R^n$ by a group $\Gamma$ of integral-integral affine
transformations that acts freely and properly.
Specific examples include the torus, where $\Gamma$ is generated
by the maps 
$$ x_1 \mapsto x_1+1 , \quad \ldots , \quad x_n \mapsto x_n+1 ;$$
the Klein bottle, where $n=2$ and $\Gamma$ is generated by the maps
$$ x_1 \mapsto x_1+1 , \quad (x_1,x_2) \mapsto (-x_1,x_2+1);$$
and Kodaira-Thurston-like manifolds, with $\Gamma$ generated by the maps
$$ (x_1,x_2,x_3) \mapsto (x_1+x_2,x_2,x_3+1),
\quad \text{ and } \quad x_j \mapsto x_j +1 \text{ for all $j \neq 3$} .$$
We ask:
\begin{quotation}
Does every compact integral-integral affine manifold
arise as a quotient of $\R^n$
by a free and proper action of a discrete group 
of integral-integral affine maps?
\end{quotation}
This appears to be an open question.
It is a special case of the Markus conjecture,
which posits that if a closed affine manifold possesses
an atlas whose transition maps 
all have $\det A = \pm 1$
then it is geodesically complete.
For a survey of the Markus conjecture, see \cite[Chap.~11]{goldman}. 

\subsection*{Volume and lattice points} \

Recall that an integral affine structure on a manifold $B$
determines a smooth measure on $B$,
and an \emph{integral-integral affine} structure
also determines a notion of ``integer points'' in~$B$.
We are now ready to state 
the result in ``integral-integral affine geometry''
that we will need in our proof of Theorem~\ref{thm:RR=BS}:

\begin{thm}[``Downstairs theorem'']
\labell{thm:volB=BZ}
Let $B$ be a compact integral-integral affine manifold.
Then the volume of $B$ is equal to the number of integral points in $B$:
  \[ \vol(B) = \abs{B_\Z}.\]
\end{thm}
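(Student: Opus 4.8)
The plan is to compare two globally defined top-degree currents on $B$: the smooth volume density $\mu_{0}$ determined by the integral affine structure (so that $\int_{B}\mu_{0}=\vol(B)$), and the counting current $\delta_{B_{\Z}}:=\sum_{p\in B_{\Z}}\delta_{p}$ supported on the integral points (so that $\int_{B}\delta_{B_{\Z}}=\abs{B_\Z}$). In a single integral-integral affine chart the integral points are exactly $\Z^{n}$, and the guiding heuristic is Poisson summation: $\sum_{k\in\Z^{n}}\delta_{k}$ is Lebesgue measure plus a sum of oscillating Fourier modes $e^{2\pi i\ell\cdot x}$ with $\ell\neq 0$. (It is precisely the condition $b\in\Z^{n}$ on the translation parts of the transition maps that makes $B_{\Z}$ well-defined and permutes these modes among themselves without extra phases.) Thus I would aim to show that $\delta_{B_{\Z}}-\mu_{0}$ is exact, so that $\abs{B_\Z}-\vol(B)=\int_{B}(\delta_{B_{\Z}}-\mu_{0})=0$ by Stokes on the closed manifold $B$.

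To globalize this I would use the local torus action coming from the integral lattice $\Lambda\subset TB$, which is well-defined because the linear parts of the transition maps lie in $\GL_{n}(\Z)$. In affine coordinates this action is the affine-translation flow, with $\Lambda$ as its canonical period lattice, so the acting torus is $T^{n}=\R^{n}/\Lambda\cong\R^{n}/\Z^{n}$. The density $\mu_{0}$ has constant coefficients, hence is invariant, and the invariant top densities are exactly the constant multiples of $\mu_{0}$. By Theorem~\ref{thm:invariant-representative} there is an averaging operator $P$ onto invariant forms together with a homotopy $K$ satisfying $\operatorname{id}-P=dK+Kd$, and in each chart $P$ acts by local torus averaging. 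The key point is then a purely local computation: averaging $\sum_{k\in\Z^{n}}\delta_{k}$ over $\theta\in T^{n}$ yields $\sum_{k}\int_{[0,1)^{n}}\delta_{k+\theta}\,d\theta=\bigl(\sum_{k}\mathbf{1}_{k+[0,1)^{n}}\bigr)\mu_{0}=\mu_{0}$, because the unit cubes $k+[0,1)^{n}$ tile $\R^{n}$. Since $\mu_{0}$ is globally defined, $P(\delta_{B_{\Z}})=\mu_{0}$ on all of $B$.

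With this in hand the conclusion is immediate. Because $\delta_{B_{\Z}}$ is a current of top degree, $d\delta_{B_{\Z}}=0$ for degree reasons, so $\delta_{B_{\Z}}-\mu_{0}=(dK+Kd)\delta_{B_{\Z}}=dK\delta_{B_{\Z}}$ is exact, and integrating over the closed manifold $B$ gives $\abs{B_\Z}-\vol(B)=0$. In the non-orientable case (such as the Klein bottle), $\mu_{0}$ and $\delta_{B_{\Z}}$ are densities, i.e.\ currents twisted by the orientation local system, and I would invoke the Poincar\'e duality of Theorem~\ref{p:poincare} to justify that an exact twisted top current integrates to zero.

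The main obstacle is not this averaging computation but the construction of $P$ and $K$ themselves. Because there is \emph{no} global torus action — only a local one, in general with non-closed orbits and nontrivial monodromy of $\Lambda$ (translating by a lattice vector need not fix a point, as already on $\R^{n}/2\Z^{n}$) — the usual one-line averaging over a compact group is unavailable, and the invariant-representative statement of Theorem~\ref{thm:invariant-representative} for a \emph{local} torus action is exactly the not-entirely-obvious cohomological input that the introduction flags. I expect that step, rather than anything measure-theoretic, to carry the real weight of the proof.
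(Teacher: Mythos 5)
The central device of your argument --- averaging over ``the local torus action coming from the integral lattice $\Lambda \subset TB$'' on $B$ itself --- does not exist, and this is a genuine gap rather than a technicality. In the paper's sense (Definition~\ref{defn:local-torus-action}), a local $T$-action requires honest $T$-actions on open sets covering the manifold, compatible up to automorphisms of $T$. An integral affine chart identifies $U \subset B$ with a proper open subset $\Omega \subset \R^n$, and $\T^n = \R^n/\Z^n$ admits no translation action on such an $\Omega$: translations move points out of $\Omega$. Nor can you generate such an action by flowing constant vector fields: within a chart they are incomplete; they do not patch to global vector fields on $B$ (already on the Klein bottle, $\partial/\partial x_1$ reverses sign under the transition map); and completeness of the affine connection on a compact $B$ is exactly the open Markus conjecture that the paper flags. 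Consequently Theorem~\ref{thm:invariant-representative} cannot be invoked for $B$: your projection $P$ and homotopy $K$ are unavailable (the paper's proof of that theorem is a Mayer--Vietoris/Five Lemma argument for \emph{smooth} forms, which produces no chain homotopy and no extension to currents in any case), and the identity $P(\delta_{B_\Z}) = \mu_0$ has no meaning on $B$. Note also that you cannot sidestep the averaging by arguing that two closed top-degree currents on a compact manifold are cohomologous: in top degree, cohomologous is \emph{equivalent} to having equal integrals, which is the statement to be proved.

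Your Poisson-summation heuristic is nevertheless the right intuition, and the paper's proof can be read as its rigorous implementation, carried out on the space where a torus genuinely acts. The paper passes to the Lie torus bundle $M^\vee = TB/\Lambda$, which carries a local $\T^n$-action along its fibres (Example~\ref{lemma:lie-torus-bundle-local-action}); it encodes $B_\Z$ as the transverse intersection of two sections of $M^\vee$, the zero section $Z_0$ and the affine-lattice section $D = \{(x,[-x])\}$; and it proves in Proposition~\ref{p:poincare} --- using Theorem~\ref{thm:invariant-representative} on $M^\vee$, applied to smooth closed test forms rather than to delta currents --- that the smooth invariant form $dy^n$ of Lemma~\ref{exists dyn} is Poincar\'e dual to $Z_0$. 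That proposition is precisely your ``averaged delta equals Lebesgue'' statement, but for the fibrewise delta along $Z_0$ inside $M^\vee$, where the averaging is legitimate. Pulling back along $D$ then yields
\[ \abs{B_\Z} \;=\; (-1)^n \int_D dy^n \;=\; \int_B dx_1 \wedge \cdots \wedge dx_n \;=\; \vol(B). \]
Finally, the non-orientable case is handled by passing to the orientation double cover at the outset, not by twisted currents; your appeal to Proposition~\ref{p:poincare} for that purpose conflates two different statements. If you wish to salvage your current-theoretic formulation, the fix is to lift $\delta_{B_\Z}$ to $M^\vee$, where it becomes intersection with $Z_0$ --- and at that point you have reconstructed the paper's proof.
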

We will prove this theorem in Section~\ref{sec:pf-volB=BZ}.

\section{Arnol'd--Liouville}
\labell{sec:defns 2}

We begin this section by recalling some properties of proper
Lagrangian fibrations.
First, we recall the Arnol'd--Liouville theorem.
Next, we recall that a proper Lagrangian fibration
induces on its base an integral affine structure,
whose induced measure is the push-forward of Liouville measure.
We then prove an ``enhanced Arnol'd--Liouville Theorem,''
giving a local model for a pre-quantized proper Lagrangian fibration. 
Finally, we show that a pre-quantized proper Lagrangian fibration
induces on its base an integral-\emph{integral} affine structure,
whose lattice points coincide with the Bohr--Sommerfeld points.
This provides preparation for Section~\ref{sec:RR=BS},
where we prove our Upstairs Theorem (``$\RR = |\BS|$''),
assuming our Downstairs Theorem (``$\vol(B)=|B_\Z|$'').
We defer the proof of our Downstairs Theorem 
to Section~\ref{sec:pf-volB=BZ}.

\subsection*{Lie tori and smooth tori} \

We will encounter some tori.
For a fixed $n$, we will use $\T^n$ to denote the standard torus $(S^1)^n$.
To avoid ambiguity, we will use the terms \define{Lie torus}
for a Lie group that is isomorphic (as a Lie group) to $\T^n$
for some $n$,
and \define{smooth torus} for a manifold that is diffeomorphic to $\T^n$
for some $n$.

\subsection*{Arnol'd--Liouville}\ 

A map from a symplectic manifold $(M,\omega)$ to a manifold $B$
is a \define{Lagrangian fibration} if
it is a (locally trivializable) fibre bundle 
whose fibres are Lagrangian submanifolds of $M$. 
It is a \define{Lagrangian torus fibration}
if, additionally, its fibres are (smooth) tori.\footnote{
People often study ``singular Lagrangian fibrations'',
especially in the study of integrable systems.  
In this paper we only consider ``regular'' fibrations,
i.e., actual fibrations.
}
The Arnol'd--Liouville theorem implies that every proper Lagrangian fibration
with connected fibres is a Lagrangian torus fibration.
Moreover, this theorem gives local ``action-angle coordinates''
on such fibrations.  
We express such coordinates in terms of what we call 
``Arnol'd--Liouville charts.''

\begin{defn}
Given a proper Lagrangian fibration $\pi\colon (M,\omega) \to B$ 
with connected fibres,
an \define{Arnol'd--Liouville chart} consists of:
\begin{itemize}
\item a connected open subset $U$ of $B$
\item a diffeomorphism of $U$ with an open subset $\Omega$ of $\R^n$
\item a lifting of this diffeomorphism to a symplectomorphism
from $\pi\inv(U)$ to $\Omega\cross \T^n$ with the
standard symplectic form $\sum dx_j \wedge dt_j$,
with $t_j$ mod $1$ coordinates on the $\T^n$ factor.
\end{itemize}
We refer to such a space $\Omega\cross \T^n$,
with its standard symplectic form,
as an \define{Arnol'd--Liouville model.}  
\end{defn}

\begin{Theorem}[Arnol'd--Liouville]\labell{thm:A-L}
Every proper Lagrangian fibration with connected fibres
can be covered by Arnol'd--Liouville charts.
\end{Theorem}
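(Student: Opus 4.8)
The plan is to prove the classical Arnol'd--Liouville theorem in three stages: first identify the fibres as tori, then construct ``action'' coordinates on the base out of a period lattice, and finally build the ``angle'' coordinates together with the symplectic normalization. Throughout, ``proper'' makes each fibre compact and the hypothesis supplies connectedness.

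First I would fix $b_0 \in B$ and a coordinate chart $y = (y_1, \dots, y_n)$ on a connected, simply connected neighbourhood $V$ of $b_0$, and consider the Hamiltonian vector fields $\xi_j := X_{\pi^* y_j}$ on $\pi\inv(V)$, normalized by $\iota_{\xi_j}\omega = -d(\pi^* y_j)$. Since $\pi^* y_j$ is constant along fibres, $d(\pi^* y_j)$ annihilates vertical vectors, so each $\xi_j$ is $\omega$-orthogonal to the Lagrangian fibres and hence tangent to them. The bracket $\{\pi^* y_i, \pi^* y_j\} = \omega(\xi_i, \xi_j)$ vanishes because both fields are tangent to a Lagrangian fibre, so $[\xi_i, \xi_j] = 0$; and the musical identification via $\omega$ sends $(d\pi_p)^* \alpha$ to a vertical vector, giving an isomorphism $T^*_b B \cong T_p F_b$, so the $\xi_j(p)$ are a basis of $T_p F_b$ at every $p$. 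Their joint flow therefore defines a locally free, transitive $\R^n$-action on each compact connected fibre $F_b$, exhibiting $F_b$ as a quotient $\R^n / \Lambda_b$ by a full-rank lattice, i.e.\ a smooth torus.

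Next I would straighten the period lattices $\Lambda_b \subset T^*_b B$ (the stabilizers of the $\R^n$-action, under the identification above) into the standard lattice. Using local triviality of the bundle, I would trivialize $\pi\inv(V) \cong V \cross F$ and transport fixed generating loops of $H_1(F)$ to a smooth family $\gamma_1(b), \dots, \gamma_n(b)$ spanning $H_1(F_b)$. On the simply connected $V$ I would write $\omega = d\theta$ on $\pi\inv(V)$ and set $I_k(b) := \oint_{\gamma_k(b)} \theta$; this is well-defined because $\theta|_{F_b}$ is closed (the fibre is Lagrangian), and smooth in $b$. The key computation, differentiating under the integral sign and using $d\theta = \omega$ together with the identification $T^*_b B \cong T_p F_b$, gives $dI_k = \lambda_k$, the lattice covector dual to $\gamma_k$. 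Since the $\lambda_k$ form a basis of the full-rank lattice $\Lambda_b$, the $dI_k$ are linearly independent, so $(I_1, \dots, I_n)$ is a coordinate system on $V$ in which the period lattice is the standard $\Z^n$ and the fibrewise $\R^n$-action descends to the standard $\T^n = \R^n/\Z^n$.

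Finally I would produce the angle coordinates and verify the normal form. After shrinking $V$, I would choose a Lagrangian section $\sigma \colon V \to M$ and define $\Psi(I, t) = \Phi^t(\sigma(I))$, where $\Phi^t$ is the joint time-$t$ flow of $X_{I_1}, \dots, X_{I_n}$, well-defined on $\Omega \cross \T^n$ (with $\Omega = I(V)$) by the integrality just arranged. One checks $\Psi$ is a diffeomorphism and that $\Psi^* \omega = \sum_j dI_j \wedge dt_j$: the mixed terms come from $\iota_{X_{I_j}}\omega = -dI_j$, the $dt \wedge dt$ terms vanish because the fibres are Lagrangian and the flows commute, and the $dI \wedge dI$ terms vanish because $\sigma$ is Lagrangian. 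This exhibits an Arnol'd--Liouville chart around $\pi(b_0)$; since $b_0$ is arbitrary, these charts cover $B$. The hard part will be the closedness/integrability of the period lattice, namely the identity $dI_k = \lambda_k$ showing that the differentials of the action integrals generate the lattice (equivalently, that $\Lambda$ is a Lagrangian submanifold of $(T^*V, \omega_{\mathrm{can}})$); everything else---the commuting flows, the smooth choice of cycles $\gamma_k(b)$, the existence of a local Lagrangian section, and the bookkeeping in the normalization---is routine once that identity is in hand.
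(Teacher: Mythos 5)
Your proposal is correct in outline, but it is worth noting that the paper does not actually prove this theorem: its ``proof'' is a one-line citation, stating that the result is a reformulation of the classical Arnol'd--Liouville theorem for compact fibres and pointing to Arnol'd \cite[\S 49--50]{arnold:book}, Markus--Meyer \cite{markus-meyer}, and Duistermaat \cite{duist-act-ang}. What you have written is essentially a reconstruction of the standard argument contained in those references (closest in spirit to Duistermaat's treatment): commuting fibrewise Hamiltonian flows exhibit each compact connected fibre as $\R^n/\Lambda_b$; the action integrals $I_k = \oint_{\gamma_k}\theta$ straighten the period lattice via the identity $dI_k = \lambda_k$, which you correctly isolate as the crux; and the flow out of a local Lagrangian section, together with translation-invariance of $\Psi^*\omega$, yields the normal form $\sum dI_j \wedge dt_j$. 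So the difference is one of form rather than substance: the paper outsources the argument, while you supply it. Two small points to tighten. First, simple connectedness of $V$ is not enough to write $\omega = d\theta$ on $\pi\inv(V)$: by K\"unneth, $H^2(V\times \T^n)$ contains $H^2(V)\oplus H^2(\T^n)$, so you should take $V$ to be a coordinate ball (contractible) and then argue that $[\omega]$ restricts to zero on $\pi\inv(V)$ because its pullback to a fibre vanishes (the fibres are Lagrangian) and restriction to a fibre is an isomorphism on $H^2$. Second, the completeness of the fibrewise flows (needed for the $\R^n$-action to be defined for all time) should be attributed explicitly to the compactness of the fibres, which is where properness enters.
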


\begin{proof}
This theorem is a reformulation of the Arnol'd--Liouville Theorem
in the case that the fibres are compact.
See, e.g., \cite[\S 49--50]{arnold:book}, 
\cite[lemma in \S 3]{markus-meyer}, \cite{duist-act-ang}.
\end{proof}

\begin{lemma}\labell{lemma:AL-isomorphisms}
Let
$$\Omega\cross \T^n \to \Omega' \cross \T^n \quad , \quad 
 (x,t) \mapsto (x',t') $$ 
be an isomorphism of Arnol'd--Liouville models, i.e., 
a fibre-preserving symplectomorphism. Then its components have the form
\[ x' = Ax+b, \qquad t'=A^{-T} t + g(x) \]
for some $A\in \GL_n(\Z)$, \ $b\in \R^n$, and
$g\colon \Omega \to \T^n$, 
where $A^{-T}$ denotes the inverse transpose of the matrix $A$.
\end{lemma}
(Note that our definition of an Arnol'd--Liouville chart assumes 
that $\Omega$ is connected.)
\begin{proof}[Proof of Lemma \ref{lemma:AL-isomorphisms}]
This follows from a direct computation 
(see for example Lemma 2.1 in~\cite{sepe}).
\end{proof}

\subsection*{Lagrangian torus fibration induces integral affine structure} \

\begin{lemma}\labell{lemma:volM=volB}
Let $\pi\colon (M,\omega) \to B$ be a $2n$-dimensional
Lagrangian torus fibration.
Then the set of maps $U\to \Omega$ that arise from Arnol'd--Liouville charts
is an integral affine atlas on $B$.
Moreover,
the induced measure on $B$ coincides with the push-forward
to $B$ of Liouville measure on $M$.
In particular, if $M$ is compact, then
$\vol(M) = \vol(B)$ (with respect to these measures). 
\end{lemma}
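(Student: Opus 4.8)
The plan is to establish the three assertions in order, using only the Arnol'd--Liouville charts provided by Theorem~\ref{thm:A-L} and the explicit transition maps of Lemma~\ref{lemma:AL-isomorphisms}; the whole argument is local, carried out in a single chart and then checked to glue.

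To see that the base maps $U \to \Omega$ form an integral affine atlas, first note that Theorem~\ref{thm:A-L} guarantees that such charts cover $B$. Given two of them, I would restrict to a connected component of the overlap $U_\alpha \cap U_\beta$; there the two lifting symplectomorphisms compose to a fibre-preserving symplectomorphism of Arnol'd--Liouville models, so Lemma~\ref{lemma:AL-isomorphisms} applies. Since the charts are lifts of the base maps, the base transition $\phi_\beta \circ \phi_\alpha^{-1}$ is exactly the $x$-component $x' = Ax + b$ of that isomorphism, with $A \in \GL_n(\Z)$ and $b\in\R^n$. This is an integral affine map, so the transitions are locally integral affine and the atlas is integral affine.

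For the measure comparison, I would work in one Arnol'd--Liouville chart, where $\pi^{-1}(U) \cong \Omega \times \T^n$ and $\omega = \sum_j dx_j \wedge dt_j$. Expanding, the only surviving monomials in $\omega^n$ use each $dx_j \wedge dt_j$ exactly once, so $\omega^n = n!\, dx_1 \wedge dt_1 \wedge \cdots \wedge dx_n \wedge dt_n$ and the Liouville density $\abs{\omega^n/n!}$ is just $dx_1 \cdots dx_n\, dt_1 \cdots dt_n$. Pushing this forward to $B$ integrates out the fibre coordinates $t_j$; because these are coordinates mod $1$, the fibre $\T^n = (\R/\Z)^n$ has total volume $1$, so the push-forward over a set corresponding to $E \subset \Omega$ equals its Lebesgue measure $\abs{E}$. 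This is precisely the integral affine measure of Section~\ref{sec:downstairs}, so the two measures agree in each chart; and because $A \in \GL_n(\Z)$ forces $\abs{\det A} = 1$, both transform the same way under transitions, upgrading the chartwise agreement to a global one.

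Finally, when $M$ is compact its continuous image $B$ is compact, and integrating the measure identity gives $\vol(M) = \int_M \abs{\omega^n/n!} = \vol(B)$. The only real point of care is the bookkeeping of normalizations: the factor $n!$ in $\omega^n$, and the fact that the fibre torus has volume exactly $1$, which relies on the mod-$1$ convention for the $t_j$ built into the definition of an Arnol'd--Liouville model. This matching of constants is what makes the raw push-forward of Liouville measure coincide with the integral affine measure on the nose rather than up to a universal factor; it is routine rather than deep, but it is the step I would be most careful to get right.
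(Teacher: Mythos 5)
Your proof is correct and takes essentially the same approach as the paper's: the atlas claim follows from Theorem~\ref{thm:A-L} plus Lemma~\ref{lemma:AL-isomorphisms}, and the measure claim from the chartwise computation $\omega^n/n! = dx_1\wedge dt_1\wedge\cdots\wedge dx_n\wedge dt_n$ with the unit-volume fibre $\T^n$ integrated out. The paper compresses this last step into the phrase ``direct computation in local coordinates,'' which you have simply carried out explicitly (including the $\abs{\det A}=1$ consistency check).
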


\begin{proof}
By Theorem~\ref{thm:A-L},
the domains of the maps $U \to \Omega$ are an open cover of $B$.
By Lemma~\ref{lemma:AL-isomorphisms},
the transition maps of this cover are integral affine.
By a direct computation in local coordinates in any Arnol'd--Liouville chart,
the push-forward of Liouville measure on $M$
is the measure on $B$.
\end{proof}

\subsection*{Enhanced Arnol'd--Liouville}\

In this subsection we will give a local model for a \emph{prequantized}
proper Lagrangian fibration, and prove an ``enhanced'' Arnol'd--Liouville
Theorem (Theorem~\ref{l:enhancedAL}).

\begin{defn}
Given a prequantized Lagrangian torus fibration,
i.e., a Lagrangian torus fibration $\pi\colon (M,\omega) \to B$,
equipped with a Hermitian line bundle $p \colon L \to M$
with connection $\nabla$ whose curvature is $\omega$, 
an \define{enhanced Arnol'd--Liouville chart} consists of:
\begin{itemize}
\item a connected open subset $U$ of $B$
\item a diffeomorphism of $U$ with an open subset $\Omega$ of $\R^n$
\item a lifting of this diffeomorphism to a symplectomorphism
from $\pi\inv(U)$ to $\Omega\cross \T^n$ with the
standard symplectic form $\sum dx_j \wedge dt_j$,
with $t_j$ mod $1$ coordinates on the $\T^n$ factor.
\item a lifting of this symplectomorphism to an isomorphism 
of Hermitian line bundles
from $p\inv(\pi\inv(U))$ to $\Omega \cross \T^n \cross \C$
that intertwines $\nabla$ with $d- 2 \pi i \sum x_j\, dt_j$.
\end{itemize}
We refer to such a complex line bundle 
$\Omega \cross \T^n \cross \C$ over $\Omega\cross \T^n$,
with the standard symplectic form on its base
and with the covariant derivative $d - 2\pi i \sum x_j\, dt_j$,
as an \define{enhanced Arnol'd--Liouville model}.
\end{defn}

\begin{Remark}\labell{rmk:conventions}
There are various different conventions in the literature for
curvature and prequantization; ours are as follows.
A 1-form $\alpha$ represents a connection with respect to a local
trivialization if, in this local trivialization, the covariant
derivative is $d + 2\pi i \alpha$.
The curvature of the connection is then $-d\alpha$.
A prequantization line bundle has a connection with curvature equal to
the symplectic form.
Thus, in an enhanced Arnol'd--Liouville chart,
the 1-form representing the connection is $-\sum x_j\, dt_j$
and the curvature is $\sum dx_j \wedge dt_j$.

Our convention for a 1-form representing a connection 
agrees with Kostant's paper \cite{Kost} (see Equation 1.4.3 on p.~98)
and differs from Woodhouse's book \cite{Wood}
by a sign and a factor of $2\pi$ (see \cite[\S A.3]{Wood}).
Given the 1-form, 
our convention for the curvature of a connection 
has opposite sign from \cite{Kost} (see p.~104, after Prop.~1.6.1)
and agrees with the conventions in~\cite{GGK} (see Lemma A.3, p.~169)
and \cite{Wood} (see \S A.3).
Given the curvature,
our convention for the prequantization data
agrees with~\cite{Kost} (see \S 4 on p.~165]) and with~\cite{sniatycki},
and 
differs from \cite{Wood} by a factor of $\hbar$ (see \cite[p.~158]{Wood}).

\eor
\end{Remark}

\begin{lemma}\labell{lemma:hol-enhanced-AL}
Let $\Omega\cross \T^n \cross \C$ be an enhanced Arnol'd--Liouville model,
with coordinates $(x_1,\ldots,x_n,\linebreak t_1,\ldots,t_n,z)$ 
with $t_j$ taken mod 1.  
For any fixed $x\in \Omega$ and any $k \in \{1,\ldots,n\}$, 
let $\gamma_k$ be the loop in $\{x \} \cross \T^n$ corresponding 
to one cycle in the $t_k$ variable.
Then the holonomy around $\gamma_k$ is $e^{2\pi i x_k}$.
\end{lemma}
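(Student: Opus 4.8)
The plan is to compute the holonomy by directly solving the parallel transport equation for the explicit connection given in the definition of the enhanced Arnol'd--Liouville model, since everything here is completely concrete.

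First I would record the covariant derivative: following the conventions fixed in Remark~\ref{rmk:conventions}, on $\Omega \cross \T^n \cross \C$ we have $\nabla = d - 2\pi i \sum_j x_j\, dt_j$, so that writing $\nabla = d + A$ the connection $1$-form is $A = -2\pi i \sum_j x_j\, dt_j$. The key observation is that $\gamma_k$ lies in a single fibre $\{x\} \cross \T^n$ with $x$ held fixed, so the coefficients $x_j$ are constant along the loop and $A$ restricts there to a $1$-form with constant coefficients.

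Next I parametrize $\gamma_k$ by $t_k \in [0,1]$ with all other coordinates fixed, so that $\dot\gamma_k = \partial_{t_k}$ and $A(\dot\gamma_k) = -2\pi i x_k$. A section $z = s(t_k)$ of the trivial line bundle is covariant constant along $\gamma_k$ exactly when $\nabla_{\dot\gamma_k} s = 0$, that is, when $s'(t_k) - 2\pi i x_k\, s(t_k) = 0$. This linear ODE has solution $s(t_k) = s(0)\, e^{2\pi i x_k t_k}$. Transporting once around the loop, from $t_k = 0$ to $t_k = 1$, therefore multiplies the fibre coordinate by $e^{2\pi i x_k}$, which is the claimed holonomy; equivalently one may write it as $\exp\left(-\int_{\gamma_k} A\right) = \exp\left(2\pi i x_k\right)$.

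I expect no serious obstacle here: the computation is a one-line ODE. The only point demanding care is the bookkeeping of signs and of the factor $2\pi i$, and in particular the direction/orientation convention that decides whether the holonomy is this quantity or its inverse. I would pin all of these down against the conventions of Remark~\ref{rmk:conventions}, after which the statement follows immediately.
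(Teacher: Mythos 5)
Your proposal is correct and follows essentially the same route as the paper: the paper's proof also parametrizes $\gamma_k$ with all coordinates fixed except $t_k(s)=s$, and reads off the horizontal lifts $z(s) = e^{2\pi i s x_k}z(0)$ from the covariant derivative $d - 2\pi i \sum x_j\, dt_j$, which is exactly the solution of the parallel-transport ODE you write down. Your sign bookkeeping and the resulting holonomy $e^{2\pi i x_k}$ agree with the paper's conventions, so there is nothing to fix.
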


\begin{proof}
We can take $\gamma_k$ to be given by 
$x_j(s)=x_j(0)$ for all $j$, \ $t_j(s) = 0$ for $j \neq k$, 
and $t_k(s) = s$, for $0 \leq s \leq 1$.
Because the covariant derivative is $d - 2\pi i\sum x_j\, dt_j$,
the horizontal lifts of $\gamma_k$ are then given by 
$z(s) = e^{2\pi i s x_k} z(0)$.
\end{proof}

\begin{thm}[Enhanced Arnol'd--Liouville] \labell{l:enhancedAL}
Every prequantized Lagrangian torus fibration
can be covered by enhanced Arnol'd--Liouville charts.
\end{thm}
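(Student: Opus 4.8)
The plan is to build the enhanced chart on top of an ordinary Arnol'd--Liouville chart, so that everything reduces to a computation on a single model $\Omega\cross\T^n$. First I would invoke Theorem~\ref{thm:A-L} to cover $B$ by Arnol'd--Liouville charts, and then shrink each chart so that $\Omega$ is an open ball; this already supplies, around each point, the first three pieces of data of an enhanced chart (the open set $U$, the diffeomorphism $U\cong\Omega$, and the symplectomorphism $\pi\inv(U)\cong\Omega\cross\T^n$). It then remains to produce the fourth piece: a unitary, connection-preserving isomorphism between $L\restr{\pi\inv(U)}$, transported via this symplectomorphism to $\Omega\cross\T^n$, and the trivial bundle $\Omega\cross\T^n\cross\C$ carrying the model connection $d-2\pi i\sum_j x_j\,dt_j$. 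Equivalently, I must exhibit a global unit-length section $\sigma$ of the transported bundle with $\nabla\sigma=-2\pi i(\sum_j x_j\,dt_j)\tensor\sigma$.

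To find such a section I would first note that the transported bundle is topologically trivial. Indeed, on $\Omega\cross\T^n$ the symplectic form is exact, $\omega=\sum_j dx_j\wedge dt_j=d(\sum_j x_j\,dt_j)$, because the $x_j$ are globally defined functions there; hence its real first Chern class vanishes, and since $\Omega$ is contractible the group $H^2(\Omega\cross\T^n;\Z)\cong H^2(\T^n;\Z)$ is torsion-free, forcing $c_1=0$. Choosing any unit section $s_0$, the transported connection takes the form $\nabla s_0=2\pi i\,\alpha\tensor s_0$ for a real $1$-form $\alpha$ with $-d\alpha=\omega$, i.e.\ $d\alpha=-\sum_j dx_j\wedge dt_j$ (using the curvature convention of Remark~\ref{rmk:conventions}). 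A short Stokes argument, comparing $\oint_{\gamma_k}\alpha$ over the fibre loops $\gamma_k$ at nearby values of $x$, shows that $\oint_{\gamma_k}\alpha=-x_k+c_k$ for some constants $c_k$; equivalently, the holonomy around $\gamma_k$ is $e^{2\pi i(x_k-c_k)}$, which matches the model holonomy of Lemma~\ref{lemma:hol-enhanced-AL} up to the constants $c_k$.

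The two remaining discrepancies --- the constants $c_k$, and the fact that $\alpha$ need not literally equal $-\sum_j x_j\,dt_j$ --- I would remove in two steps. First I translate the action coordinates by $x_k\mapsto x_k-c_k$; by Lemma~\ref{lemma:AL-isomorphisms} (with $A=\id$) this is an isomorphism of Arnol'd--Liouville models, so it preserves the first three pieces of data while arranging $\oint_{\gamma_k}\alpha=-x_k$ exactly. After this translation the closed $1$-form $\beta:=\alpha+\sum_j x_j\,dt_j$ has vanishing period over every $\gamma_k$, and since these loops generate $H_1(\Omega\cross\T^n)$, the form $\beta$ is exact, $\beta=df$ for a genuine real function $f$. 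Replacing $s_0$ by $\sigma=e^{2\pi i f}s_0$ then yields $\nabla\sigma=-2\pi i(\sum_j x_j\,dt_j)\tensor\sigma$, which is precisely the fourth piece of data. The main obstacle is exactly the control of these periods (equivalently, holonomies): a priori $\beta$ is only closed and could have non-integral periods, so neither gauge transformations (which shift periods only by integers) nor a naive trivialization would suffice. What rescues the argument is that the symplectic identity $d\alpha=-\omega$ forces the periods of $\alpha$ to be affine-linear in $x$ with slope $-1$, so that a single \emph{real} translation of the action coordinates annihilates the obstruction exactly.
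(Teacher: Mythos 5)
Your proposal is correct up to one trivial sign slip (with $\beta = df$ the gauge factor must be $e^{-2\pi i f}$, not $e^{2\pi i f}$; equivalently, define $f$ by $\beta = -df$), and it follows essentially the same route as the paper: reduce via Theorem~\ref{thm:A-L} to an Arnol'd--Liouville model with contractible base, trivialize the Hermitian line bundle, and then remove the discrepancy between the given connection and the model connection by a translation of the action coordinates combined with a unitary gauge transformation. The two places where you genuinely diverge are sub-steps. For trivializability, the paper restricts $L$ to a single fibre $\{q\}\cross\T^n$, notes the restricted connection is flat, and invokes the classification of flat bundles by their holonomy (plus contractibility of $\Omega$), whereas you argue via the Chern class: the curvature is exact on $\Omega\cross\T^n$ and $H^2(\Omega\cross\T^n;\Z)$ is torsion-free, so $c_1=0$; both are valid. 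For the connection discrepancy, the paper expresses the closed form $\beta$ through the generators $dt_1,\ldots,dt_n$ of $H^1(\Omega\cross\T^n)$, writing $\beta=\sum c_j\,dt_j+df$, while your Stokes computation of the periods $\oint_{\gamma_k}\alpha=-x_k+c_k$ isolates the same constants $c_j$ in an equivalent way; after that, both proofs absorb the constants by the translation $x\mapsto x-c$ (an isomorphism of Arnol'd--Liouville models with $A=\id$, as you note via Lemma~\ref{lemma:AL-isomorphisms}) and the exact part by the gauge transformation $e^{-2\pi i f}$.
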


{
\begin{proof}

By Theorem~\ref{thm:A-L}, every point in the symplectic manifold 
has a neighbourhood that can be identified
with an Arnol'd--Liouville model $\Omega \times \T^n$.
After shrinking the neighbourhood, we may assume
that $\Omega$ is contractible.
Thus, it is enough to prove that,
for any Arnol'd--Liouville model $\Omega \times \T^n$
with $\Omega$ a contractible open subset of $\R^n$,
any prequantization line bundle over it
is isomorphic to an enhanced Arnol'd--Liouville model.

Fix such an Arnol'd--Liouville model and a prequantization line bundle
$$ (L , \left< , \right> , \nabla) 
   \to (\Omega \times \T^n , \sum dx_j \wedge dt_j). $$

We first prove that $(L , \left< , \right> )$ is trivializable 
as a complex Hermitian line bundle.
Fix $q \in \Omega$. 
Because $\Omega$ is contractible, it is enough to show 
that the complex Hermitian line bundle $L|_{\{q\} \times \T^n}$ 
is trivializable.
Because the pullback of the curvature $\sum dx_j \wedge dt_j$
to the fibre $\{q\} \times \T^n$ is zero,
the pullback connection on $L|_{\{q\} \times \T^n}$ is flat. 
The holonomy of this connection becomes a map 
$$ \hol \colon \pi_1(\T^n) \to S^1 .$$
For each $j \in \{1,\ldots,n\}$, let $\gamma_j$ be a loop around $\T^n$
in the $t_j$ coordinate, and 
choose $h_j \in \R$ such that $\hol([\gamma_j]) = e^{2 \pi i h_j}$.
Then 
$\lambda := d - 2\pi i \sum h_j\, dt_j$ defines a connection on 
the trivial bundle $\T^n \times \C \to \T^n$ that has the same holonomy.
Because a flat bundle is determined up to isomorphism by its holonomy
(see Remark 1.12.1 in~\cite{Kost}),
$L|_{\{q\} \times \T^n}$ is isomorphic to this trivial bundle
(with the trivial Hermitian structure
and with the flat connection that we constructed).
In particular, $(L , \left< , \right>)$
is trivializable as a complex Hermitian line bundle.
Fix such a trivialization
$$\psi\colon L \to (\Omega\cross \T^n) \cross \C.$$

\bigskip
 
Since the curvature of $\nabla$ is $\sum dx_j \wedge dt_j$,
the trivialization $\psi$ 
takes $\nabla$ to $d - 2\pi i\sum x_j \, dt_j + 2\pi i\beta$ 
for some closed 1-form $\beta$ on $\Omega\cross \T^n$.
Since $\Omega$ is contractible, $H^1(\Omega\cross \T^n)$ is generated by
$dt_1,\ldots dt_n$, so $\beta$ has the form 
$\sum c_j\, dt_j + df$ for some constants $c_j$ and some function $f$ on
$\Omega \cross \T^n$.  Denote $c:=(c_1,\ldots,c_n)\in \R^n$. 
Then 
\[ (x,t,z) \mapsto \bigl(x, t, e^{- 2\pi i f(x,t)} z \bigr)\]
defines a bundle automorphism of $\Omega \times \T^n \times \C$ that takes 
$d - 2\pi i\sum x_j \, dt_j + 2\pi i\beta$ to $d - 2\pi i\sum (x_j-c_j) \, dt_j$.

We obtain an isomorphism of $L \to \Omega\cross \T^n$ with an
enhanced Arnol'd--Liouville model 
by composing $\psi$ with 
the bundle isomorphism
$$ (x,t,z) \mapsto (x-c,t,e^{-2\pi i f(x,t)}z).$$
\end{proof}
}

We next prove an ``enhanced'' analogue of Lemma~\ref{lemma:AL-isomorphisms},
giving the conditions for a map to preserve
enhanced Arnol'd--Liouville structures. 

\begin{lemma}\labell{lemma:enhanced-AL-isom}
Let
\begin{equation}\labell{eq:enhanced-coord-change}
 \Omega \times \T^n \times \C  \to  \Omega' \times \T^n \times \C
\quad , \quad (x,t,z) \mapsto (x',t',z')
\end{equation}
be an isomorphism of enhanced Arnol'd--Liouville models,
i.e., an isomorphism of line-bundles-with-connection
that lifts a fibre-preserving symplectomorphism
$ \Omega \times \T^n \to  \Omega' \times \T^n $.
Then its first two components have the form 
\begin{equation}\labell{eq:change-map}
  x' = Ax + b, \qquad t' = A^{-T} t + g(x) 
\end{equation}
for some $A\in \GL_n(\Z)$, $g\colon \Omega \to \T^n$, and $b\in \Z^n$.
\end{lemma}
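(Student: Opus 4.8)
The plan is to bootstrap from Lemma~\ref{lemma:AL-isomorphisms} and isolate the single extra piece of information contributed by the enhancement, namely the integrality of $b$. An isomorphism of enhanced Arnol'd--Liouville models is in particular a fibre-preserving symplectomorphism $\Omega \cross \T^n \to \Omega' \cross \T^n$, so Lemma~\ref{lemma:AL-isomorphisms} immediately supplies the stated form $x' = Ax+b$, $t' = A^{-T}t + g(x)$ with $A \in \GL_n(\Z)$, $g \colon \Omega \to \T^n$, and $b \in \R^n$. Everything below is aimed at upgrading $b \in \R^n$ to $b \in \Z^n$, using that the isomorphism intertwines the prequantization connections.

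The key idea is that an isomorphism of line-bundles-with-connection preserves holonomy around corresponding loops, while Lemma~\ref{lemma:hol-enhanced-AL} computes these holonomies explicitly. First I would fix $x \in \Omega$ and, for each $k$, take the standard loop $\gamma_k$ in the fibre $\{x\} \cross \T^n$ running once around the $t_k$-cycle; by Lemma~\ref{lemma:hol-enhanced-AL} its holonomy (with respect to the source connection) is $e^{2\pi i x_k}$. Under the coordinate change, $\gamma_k$ maps to a loop in $\{x'\} \cross \T^n$. Since $t' = A^{-T}t + g(x)$, as $t_k$ runs from $0$ to $1$ the coordinate $t'$ is displaced by $A^{-T}e_k$; the constant shift $g(x)$ does not affect the free homotopy class, so the image loop represents the integer homology class $A^{-T}e_k \in \Z^n$ (legitimately an integer vector because $A^{-T} \in \GL_n(\Z)$), with $j$-th component $(A^{-T})_{jk}$.

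Next I would compute the holonomy of that image loop. The prequantization connection restricts to a \emph{flat} connection on each fibre, because the curvature $\sum dx_j \wedge dt_j$ pulls back to zero there; and since $S^1$ is abelian the holonomy descends to a homomorphism $H_1(\T^n) = \Z^n \to S^1$, which by Lemma~\ref{lemma:hol-enhanced-AL} sends the class $[\gamma'_j]$ of the standard $t'_j$-loop to $e^{2\pi i x'_j}$. Hence the image loop, of class $A^{-T}e_k$, has holonomy $\prod_j \bigl(e^{2\pi i x'_j}\bigr)^{(A^{-T})_{jk}} = e^{2\pi i (A\inv x')_k}$. Because the isomorphism intertwines the connections, this must equal the source holonomy $e^{2\pi i x_k}$.

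Finally, substituting $x' = Ax+b$ gives $A\inv x' = x + A\inv b$, so the matched holonomies yield $e^{2\pi i (A\inv b)_k} = 1$ for every $k$ and every $x \in \Omega$; that is, $A\inv b \in \Z^n$, whence $b = A(A\inv b) \in \Z^n$ since $A \in \GL_n(\Z)$. I expect the only delicate point to be the bookkeeping in the middle step --- correctly identifying the homology class of the image of $\gamma_k$ as $A^{-T}e_k$ and invoking multiplicativity of the abelian flat fibre holonomy --- but given Lemmas~\ref{lemma:AL-isomorphisms} and~\ref{lemma:hol-enhanced-AL} this is routine.
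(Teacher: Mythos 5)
Your proposal is correct and follows essentially the same route as the paper's proof: invoke Lemma~\ref{lemma:AL-isomorphisms} to get the form of the map with $b \in \R^n$, use Lemma~\ref{lemma:hol-enhanced-AL} together with the flatness of the connection on fibres (so holonomy descends to a homomorphism on $H_1$) to compare holonomies of the standard loops and their images, and conclude $A\inv b \in \Z^n$, hence $b \in \Z^n$. The index bookkeeping in your middle step matches the paper's computation exactly.
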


\begin{proof}
By Lemma~\ref{lemma:AL-isomorphisms}, the first two components
have the form~\eqref{eq:change-map}
for some $A\in \GL_n(\Z)$, \ $g\colon \Omega \to \T^n$,
and $b\in \R^n$.  It remains to show that $b \in \Z^n$.

Writing the coordinates on 
$\Omega\cross \T^n \cross \C$ as  $(x_1,\ldots,x_n,t_1,\ldots,t_n,z)$,
with $t_j$ taken mod 1, 
for any fixed $x\in \Omega$ and any $j \in \{1,\ldots,n\}$, 
let $\gamma_j$ be the loop in $\{x \} \cross \T^n$ corresponding 
to one cycle in the $t_j$ variable.
By Lemma~\ref{lemma:hol-enhanced-AL},
the holonomy around $\gamma_j$ is $e^{2\pi i x_j}$.
Since the bundle $\{x\}\cross \T^n \cross \C$ is flat, the holonomy
is the same on homologous cycles, and thus gives a 
homomorphism $\hol_x \colon H_1(\{x\}\cross \T^n) \to S^1$.
Let $[\gamma_j]$ denote the homology
class of $\gamma_j$ in $H_1(\{x\}\cross \T^n)$.  

Similarly, for each $k=1,\ldots,n$ let $\gamma_k'$ be a loop in
a fibre $\{x'\}\cross \T^n$ of $\Omega'\cross \T^n \cross \C$
corresponding to one cycle in the $t_k'$ variable.

Under the map~\eqref{eq:enhanced-coord-change},
the loop $\gamma_j$ maps to a loop homologous to 
$\sum_k (A\inv)_{jk} [\gamma_k']$,
where as usual $(A\inv)_{jk}$ denotes the $(j,k)$ entry of the matrix $A\inv$.
Because the map~\eqref{eq:enhanced-coord-change} intertwines the connections, 
the holonomy of $\gamma_j$ equals the holonomy of its image
under the map~\eqref{eq:change-map}, which is given by 
\[ 
\hol_{x'}\Bigl(\sum_k (A\inv)_{jk} \gamma_k'\Bigr) =
  \exp \Bigl(2\pi i \sum_k (A\inv)_{jk} x_k'\Bigr). \]

Now $x_k' = \sum_i A_{ki} x_i + b_k$, so this becomes
\begin{align*}
  &= \exp \biggl(2\pi i \sum_k (A\inv)_{jk} \Bigl(\sum_i A_{ki} x_k + b_k\Bigr)\biggr)\\
  &= \exp \left(2\pi i \biggl( \sum_{k,i} (A\inv)_{jk} (A)_{ki} x_i + \sum_k (A\inv)_{jk} b_k \biggr)\right)\\
  &= \exp \biggl(2\pi i \bigl(x_j + (A\inv b)_j \bigr)\biggr).
\end{align*}
This will equal $e^{2\pi i x_j}$ iff $(A\inv b)_j$ is an integer.
Thus, the coordinate change map will preserve the holonomy of the connection
iff $A\inv b \in \Z^n$.
Since $A\inv \in \GL_n(\Z)$, this will be true iff $b\in \Z^n$.
\end{proof}

\subsection*{Prequantized Lagrangian torus fibration 
induces integral-integral affine structure} \ 

Finally, we show that a prequantization of a proper Lagrangian fibration
determines an integral-integral affine structure on the base.

\begin{lemma}\labell{lemma:prequant-implies-IIA}
Let $\pi\colon (M,\omega) \to B$ be a $2n$-dimensional
Lagrangian torus fibration with prequantization $(L,\nabla)$.  
Then the set of maps $U\to \Omega$ that arise from
enhanced Arnol'd--Liouville charts
is an integral-integral affine atlas on $B$.
Moreover, the set of integral points $B_\Z$ for this integral-integral
affine structure coincides with
the Bohr--Sommerfeld set of $(L,\nabla)$ in $B$.
\end{lemma}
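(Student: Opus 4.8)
The plan is to prove the two assertions in parallel with the unenhanced Lemma~\ref{lemma:volM=volB}, substituting the enhanced versions of the covering and transition-map results, and then to read off the Bohr--Sommerfeld condition directly from the holonomy computation of Lemma~\ref{lemma:hol-enhanced-AL}.

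First I would establish that the maps $U \to \Omega$ form an integral-integral affine atlas. By the Enhanced Arnol'd--Liouville Theorem (Theorem~\ref{l:enhancedAL}), the domains $U$ of the enhanced charts cover $B$. By Lemma~\ref{lemma:enhanced-AL-isom}, any transition map between two enhanced charts has first component of the form $x' = Ax + b$ with $A \in \GL_n(\Z)$ and $b \in \Z^n$, which is precisely the definition of an integral-integral affine map. Hence the atlas is integral-integral affine. This is the same argument as for Lemma~\ref{lemma:volM=volB}, except that the enhanced (line-bundle) data promotes the translation part from $b \in \R^n$ to $b \in \Z^n$.

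For the second assertion, I would work in a single enhanced chart near a point $b \in B$, identifying $\pi\inv(U)$ together with the restriction of $(L,\nabla)$ with an enhanced Arnol'd--Liouville model over $\Omega \subset \R^n$, and let $x \in \Omega$ be the coordinate of $b$. By definition, $b \in B_\Z$ exactly when $x \in \Z^n$. On the other hand, the fibre over $b$ is the torus $\{x\} \cross \T^n$, whose fundamental group is generated by the loops $\gamma_1,\ldots,\gamma_n$; since holonomy is a homomorphism, the pulled-back flat connection has trivial holonomy around every loop if and only if it has trivial holonomy around each $\gamma_k$. By Lemma~\ref{lemma:hol-enhanced-AL} the holonomy around $\gamma_k$ is $e^{2\pi i x_k}$, which equals $1$ precisely when $x_k \in \Z$. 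Therefore the fibre over $b$ is Bohr--Sommerfeld if and only if $x \in \Z^n$, that is, if and only if $b \in B_\Z$, giving $\BS = B_\Z$.

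Neither step presents a serious obstacle once the enhanced machinery is in place; the real content has already been isolated in Lemma~\ref{lemma:enhanced-AL-isom}, which pins $b$ to $\Z^n$, and in Lemma~\ref{lemma:hol-enhanced-AL}, which computes the fibre holonomy. The only points requiring a moment's care are the reduction from ``trivial holonomy around all loops'' to ``trivial holonomy around the generating loops $\gamma_k$,'' which uses that the $\gamma_k$ generate $H_1(\T^n)$ together with the homomorphism property of holonomy for a flat connection; and, implicitly, that the identification of $B_\Z$ with $\{x \in \Z^n\}$ is chart-independent, which is automatic because integral-integral affine transition maps preserve $\Z^n$ while the Bohr--Sommerfeld condition is intrinsic.
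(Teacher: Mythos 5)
Your proposal is correct and follows essentially the same route as the paper's own proof: the first claim via Theorem~\ref{l:enhancedAL} plus Lemma~\ref{lemma:enhanced-AL-isom}, and the second via the holonomy computation of Lemma~\ref{lemma:hol-enhanced-AL} together with the observation that a fibre is Bohr--Sommerfeld iff the holonomy is trivial on a generating set of loops. Your write-up simply spells out in coordinates what the paper states in one sentence; no gap and no substantive difference.
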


\begin{proof}
By Theorem~\ref{l:enhancedAL}, $M$ can be covered by enhanced
Arnol'd--Liouville charts, 
and by Lemma~\ref{lemma:enhanced-AL-isom}, the coordinate change map
between the bases of two such charts
is locally an integral-integral affine transformation.
This proves the first claim.  

The second claim follows from Lemma~\ref{lemma:hol-enhanced-AL},
since a fibre is Bohr--Sommerfeld iff the holonomy is trivial around
a set of generators for the fundamental group.
\end{proof}

\begin{rmk}
The characterization of the Bohr--Sommerfeld set as points with integer
action coordinates has been around for some time.
More precisely, once the action coordinates have been fixed so that
one Bohr--Sommerfeld point has integer action coordinates,
the Bohr--Sommerfeld points in this chart are exactly those
points that have integer action coordinates.
(See Theorem 2.4 in~\cite{guil-st-gel-cet},
and the Remark at the end of \S3 of~\cite{hamilton-monod} for a brief
discussion of the subtleties.)
The existence of a globally defined Bohr--Sommerfeld set means that,
even if action coordinates cannot be globally defined,
it is possible to choose them in such a way that 
the condition ``all action coordinates integers'' is well-defined.
Such a collection of local action coordinates then gives an
integral-integral affine atlas.
\end{rmk}

\section{Proof of ``Upstairs Theorem'', assuming ``Downstairs Theorem''}
\labell{sec:RR=BS}

In this section, we prove our Upstairs Theorem, ``$\RR(M)=|\BS|$''
(which is Theorem~\ref{thm:RR=BS}), 
assuming our Downstairs Theorem, ``$\vol(B) = |B_\Z|$''
(which is Theorem~\ref{thm:volB=BZ}, which we prove
in section~\ref{sec:pf-volB=BZ}).
Originally, we had envisioned that our paper
would consist of essentially only this section,
before we realized that the Downstairs Theorem 
is not a triviality.

We begin by proving that, for a compact Lagrangian torus fibration, 
the Riemann--Roch number is equal to the volume.
Here, $[\omega]$ need not be integral.

\begin{lemma}\labell{lemma:RR=volM}
Let $\pi\colon M \to B$ be a Lagrangian torus fibration,
and assume that $M$ is compact.  Then $\RR(M) = \vol(M)$.
\end{lemma}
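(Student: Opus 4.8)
The plan is to compute $\RR(M)$ directly from its definition~\eqref{eq:RR-defn}, using the fact that a Lagrangian torus fibration admits, by Theorem~\ref{thm:A-L}, a covering by Arnol'd--Liouville charts in which the geometry is completely explicit. The key observation is that the Riemann--Roch integrand $\exp(\omega)\Todd(TM,J)$ is a cohomological/characteristic-class expression, and the Todd-class factor, which is what distinguishes $\RR(M)$ from $\int_M \exp(\omega)$, contributes nothing to the top-degree part of the integrand once we choose the complex structure adapted to the fibration.

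First I would recall that, since the Todd class is computed from the Taylor series of $\prod \frac{x_i}{1-e^{-x_i}}$ at $0$, its degree-zero component is $1$ and its higher components are built from the Chern/Pontryagin classes of $(TM,J)$. The plan is to show that for a Lagrangian torus fibration the relevant characteristic classes vanish. In an Arnol'd--Liouville model $\Omega\times\T^n$, the tangent bundle $TM$ is trivialized by the coordinate vector fields $\partial_{x_j},\partial_{t_j}$, so locally $TM$ is flat; more globally, a compatible almost complex structure $J$ can be chosen so that $(TM,J)$ is identified with the complexification $TB\otimes\C$ pulled back along $\pi$, i.e.\ $(TM,J)\cong \pi^*(TB\otimes\C)$ as a complex vector bundle. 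Since $TB\otimes\C$ is the complexification of a real bundle, its odd Chern classes are $2$-torsion and its Todd class, expressed via Pontryagin classes, reduces in a way that makes the only surviving top-degree contribution the $\Todd_0=1$ term paired against $\exp(\omega)$. I would verify this by noting that the base $B$ is covered by integral affine charts (Lemma~\ref{lemma:volM=volB}) in which $TB$ is a flat bundle, so $\pi^*(TB\otimes\C)$ is flat and all its Chern classes vanish as real (de Rham) cohomology classes; hence $\Todd(TM,J)=1$ in $H^*_{\dR}(M)$.

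With $\Todd(TM,J)=1$ established, the computation collapses: $\RR(M)=\int_M\exp(\omega)=\int_M\frac{\omega^n}{n!}$, which is precisely the Liouville volume $\vol(M)$. Here I would use that $\RR(M)$, defined via the almost-complex Todd class, is a priori an integer (as the index of a Dirac--Dolbeault operator, per the discussion following~\eqref{eq:RR-defn}), while $\int_M \omega^n/n!$ is the symplectic volume; the content of the lemma is that for Lagrangian torus fibrations these coincide with no correction terms, because $[\omega]$ need not be integral yet the higher Todd terms genuinely vanish rather than merely contributing integers.

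The main obstacle I expect is making the identification $(TM,J)\cong\pi^*(TB\otimes\C)$ precise and justifying that the Todd class vanishes \emph{as a de Rham class} on the compact total space $M$, not merely locally in each chart. Locally everything is flat, but one must check that a single compatible $J$ can be chosen globally so that the splitting $TM\cong \pi^*TB\oplus(\text{vertical})$ is respected, with the vertical bundle identified via $\omega$ with $\pi^*T^*B$, yielding $(TM,J)\cong\pi^*(TB\otimes\C)$. The flat integral affine structure on $B$ from Lemma~\ref{lemma:volM=volB} provides a flat connection on $TB$ whose pullback is a flat connection on $(TM,J)$ with trivial real Chern--Weil forms, so by Chern--Weil theory $\Todd(TM,J)=1$ in $H^*_{\dR}(M;\R)$; this is the step that requires care, since flatness gives vanishing of the \emph{real} characteristic classes, which is exactly what the integral formula~\eqref{eq:RR-defn} needs.
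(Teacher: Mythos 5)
Your proposal is correct and takes essentially the same approach as the paper: identify $(TM,J)$ with the pullback of the complexified (co)tangent bundle of $B$ (the paper does this via the isomorphism $V \cong \pi^* T^*B$ given by contraction with $\omega$, followed by $V \otimes \C \cong (TM,J)$, $u+iv \mapsto u+Jv$), endow it with the flat connection induced by the integral affine structure on $B$, conclude that the real Chern classes vanish so $\Todd(TM,J)=1$, and hence $\RR(M)=\int_M \exp(\omega)=\vol(M)$. The only cosmetic differences are that the paper lands on $\pi^*(T^*B\otimes\C)$ rather than $\pi^*(TB\otimes\C)$ (immaterial, since both are flat), and that no special choice of $J$ is required: any compatible $J$ yields the isomorphism, because the fibres are Lagrangian.
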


\begin{proof}
Recall $\RR(M) = \displaystyle \int_M e^\omega \Todd(M)$; thus,
it is enough to show that the Todd class of $M$ is trivial.
We proceed by a series of claims.

Let
$$ V := \ker \pi_* \subset TM $$ 
be the vertical bundle of the fibration.
Then we have a short exact sequence of real vector bundles over $M$:
\begin{equation*}
  0 \to V \to TM \to \pi^* T^* B \to 0 .
\end{equation*}
The map 
$$ u \mapsto u \contract \omega $$
defines an isomorphism 
$$ V \cong \pi^* T^*B $$
of real vector bundles over $M$.

Let $J$ be a compatible almost complex structure on $(M,\omega)$.
Because $V$ is Lagrangian, the map 
$$u+iv \mapsto u+Jv, \quad \text{ for each $y \in M$ and $u,v \in V_y$}, $$
defines an isomorphism 
$$ V\tensor \C \cong (TM,J).$$
of complex vector bundles over $M$.
Putting these together, we obtain an isomorphism 
\begin{equation}\labell{eq:TM-pistar-TB}
  (TM,J) \cong \pi^*(T^*B\tensor \C)
\end{equation}
of complex vector bundles over $M$.

The affine structure on $B$
induces a flat connection on the real vector bundle $TB$,
hence on its dual $T^*B$, 
hence on the complex vector bundle $T^*B \otimes \C$,
hence on its pullback, $\pi^*(T^*B \times \C)$, hence on $(TM,J)$.
Therefore the complex vector bundle $(TM,J)$ admits a flat connection,
and so its real Chern classes vanish, and so its Todd class is $1$.
Thus the Riemann--Roch number of $M$, as defined in~\eqref{eq:RR-defn}, 
satisfies
\begin{equation*}
  \RR(M,\omega) = \int_M \exp(\omega) = \int_M \frac{1}{n!} \omega^n = \vol(M).
\end{equation*}
\end{proof}

\begin{proof}[Proof of Upstairs Theorem 
assuming Downstairs Theorem] 
Let $M$ be a compact symplectic manifold,
$\pi\colon M \to B$ a Lagrangian fibration, and
$L\to M$ a prequantization line bundle.
By Lemma~\ref{lemma:prequant-implies-IIA}, these data induce 
an integral-integral affine structure on $B$
whose set of integral points coincides with the Bohr--Sommerfeld set
of $L$.  
By Lemma~\ref{lemma:RR=volM}, $\RR(M,\omega) = \vol(M,\omega)$.
By Lemma~\ref{lemma:volM=volB}, $\vol(M,\omega) = \vol(B)$.
So all we need to finish the proof of the Upstairs Theorem 
(Theorem~\ref{thm:RR=BS}) 
is to show that $\vol(B)$ is equal to the number of integral points in~$B_\Z$.
This is the content of the Downstairs Theorem (Theorem~\ref{thm:volB=BZ}), 
and so we are finished.
\end{proof}

\section{Lie torus bundles}
\labell{sec:lie-torus-bundles}

Recall that we use the term \define{Lie torus} 
for a Lie group that is isomorphic (as a Lie group) to $\T^n$ 
and the term \define{smooth torus}
for a manifold that is diffeomorphic to $\T^n$.
Accordingly, a \define{smooth torus bundle} is a fibre bundle
whose fibres are diffeomorphic to $\T^n$,
and a \define{Lie torus bundle} is a fibre bundle
whose fibres are equipped with Lie torus structures
and that admits local trivializations that respect these structures.
%

\subsection*{The integral lattice and the period lattice} \ 

An integral affine structure on $B$ gives rise to a lattice $\Lambda_q$
in each tangent space $T_qB$, varying smoothly with the basepoint $q \in B$:
take an integral affine chart $\phi\colon U \to \R^n$
whose domain $U$ contains~$q$, and define $\Lambda_q$ to be 
the pre-image of $\Z^n$ under the derivative 
$\phi_* \colon T_qB \to \R^n$ of the chart map.
Since the derivatives of coordinate changes are in $\GL_n(\Z)$, 
this condition is consistent on the overlaps of the domains of our charts,
and it defines a ``lattice bundle'' (namely, a bundle of lattices) 
$\Lambda$ in $TB$.
Suppressing the word ``bundle'', 
we refer to $\Lambda$ as the \define{integral lattice} 
of the integral affine manifold~$B$.
The \define{period lattice} of $B$ is the dual lattice, $\Lambda^*$, 
in $T^*B$:
\[ \xi \in \Lambda^*_q \quad \text{ iff } \quad \langle \xi, v \rangle \in \Z
\ \text{ for all } \ v \in \Lambda_q.\]

\begin{prop} \labell{prop:bundle-of-tori}
Let $B$ be an integral affine manifold,
and let $\Lambda \subset TB$ be its integral lattice.
Then $M^\vee := TB/\Lambda$, with its fibrewise Lie torus structure
induced from the vector bundle structure on $TB$,
is a Lie torus bundle over $B$.
Moreover, let $x_1,\ldots,x_n$ be integral affine local coordinates on $B$,
and let $x_1,\ldots,x_n,y_1,\ldots,y_n$ be the corresponding
adapted coordinates on $TB$.
Then $\Lambda = \{ y_j \in \Z \ \forall j \}$.
\end{prop}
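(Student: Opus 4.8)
The plan is to dispatch the two assertions in order, handling the explicit coordinate description first and then using it to read off the bundle structure.

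First I would prove the coordinate formula $\Lambda = \{y_j \in \Z \ \forall j\}$, which is essentially a direct unwinding of the definition of $\Lambda$. In the adapted coordinates $(x_1,\ldots,x_n,y_1,\ldots,y_n)$ associated to an integral affine chart $\phi\colon U \to \R^n$, a tangent vector at $q$ is $v = \sum_j y_j\, \partial/\partial x_j$, and the derivative $\phi_*\colon T_qB \to \R^n$ sends $v$ to $(y_1,\ldots,y_n)$. Since $\Lambda_q$ is by definition $(\phi_*)^{-1}(\Z^n)$, a vector $v$ lies in $\Lambda_q$ iff all its components $y_j$ are integers. This yields the claimed description and, in particular, shows that in such a chart the trivialization $TB|_U \cong U \times \R^n$ carries $\Lambda|_U$ onto $U \times \Z^n$.

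Next I would use this to exhibit $M^\vee = TB/\Lambda$ as a Lie torus bundle. On each fibre, the additive (abelian Lie group) structure of $T_qB$ descends to the quotient $T_qB/\Lambda_q$; because $\Lambda_q$ is a full-rank lattice, the integral affine chart induces an isomorphism of Lie groups $T_qB/\Lambda_q \cong \R^n/\Z^n = \T^n$ (the linear isomorphism $\phi_*$ sends $\Lambda_q$ onto $\Z^n$ and so descends to the quotients). Running this fibrewise over a chart domain $U$ gives a local trivialization $M^\vee|_U \cong U \times \T^n$ that respects the fibrewise Lie torus structure.

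The remaining, and only mildly delicate, point is to check that these local trivializations patch as a \emph{Lie torus} bundle, i.e.\ that the transition maps are fibrewise Lie group isomorphisms and not merely fibre-preserving diffeomorphisms. Here I would invoke that an integral affine transition map has the form $x' = Ax + b$ with $A \in \GL_n(\Z)$, so its induced map on $TB$ is the fibrewise linear map $y' = Ay$. This is a Lie group automorphism of $(\R^n,+)$, and since $A$ is invertible over $\Z$ it preserves $\Z^n$ and therefore descends to a Lie group automorphism of $\T^n = \R^n/\Z^n$. Hence the transition functions of $M^\vee$ act fibrewise through $\Aut(\T^n)$, so $M^\vee$ is a Lie torus bundle. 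That $\Lambda$ is a well-defined lattice bundle in the first place is already recorded in the text preceding the statement, so I would take it as given. The main obstacle, such as it is, is precisely this last compatibility check; everything else is a direct unwinding of the definitions.
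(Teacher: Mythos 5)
Your proposal is correct and follows essentially the same route as the paper's proof: establish $\Lambda = \{y_j \in \Z \ \forall j\}$ directly from the definition of the integral lattice and of adapted coordinates, then observe that the adapted coordinates with $y_j$ taken mod $1$ furnish the local trivializations of $M^\vee$ as a Lie torus bundle. The paper states this in two sentences; your additional verification that the transition maps $y' = Ay$ with $A \in \GL_n(\Z)$ descend to automorphisms of $\T^n$ is exactly the detail the paper leaves implicit, and it is carried out correctly.
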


\begin{proof}
The second part of the proposition follows from the definition of $\Lambda$
(and the definition of ``adapted coordinates'').
The first part of the proposition then follows
from the second part of the proposition:
the adapted coordinates, with the $y_j$s taken modulo $1$, 
give local trivializations of $M^\vee$ as a Lie torus bundle. 
\end{proof}

\subsection*{The affine lattice} \ 

An integral-\emph{integral} affine structure also gives rise to 
an ``affine lattice bundle'' $\Lambda^\aff$ in $TB$,
whose intersection $\Lambda^\aff_q$ 
with each tangent space $T_qB$ is a translation 
of the integral lattice $\Lambda_q$.
For each point $q \in B$ and integral-integral affine chart
$\phi\colon U \to \R^n$ with $x=\phi(q)$,
define $\Lambda^\aff_q$ to be the pre-image of $\Z^n$ under the map 
$x + \phi_* \colon T_qB \to \R^n$
(which is the linear approximation of $\phi$ at $q$). 
Equivalently,
$\Lambda^\aff_q = \{\sum ( - x_j + m_j) \dd{}{x_j} \st m_j \in \Z \}$.
This is independent of the choice of \iia\ chart.
Varying $q$, we refer to $\Lambda^\aff$ as the \emph{affine lattice}
of the integral-integral affine manifold $B$.


Because each $\Lambda^\aff_q$ is a $\Lambda_q$-coset in $T_qB$,
we can view $\Lambda^\aff$
as a section of the Lie torus bundle $TB/\Lambda$.
In adapted local coordinates, this section is parametrized
by 
\begin{equation} \labell{formula for s}
s(x) = (x,[-x]), 
\end{equation}
where $[x]$ denotes the equivalence class of $x$ mod $\Z^n$.
We also have the zero section $Z_0$, parametrized by $s_0(x) = (x,[0])$.
From this description in coordinates we see that these two sections meet
exactly above the points of $B_\Z$, 
that all their intersections are transverse, 
and that all their intersections have the same sign
(which depends only on the convention for the orientation 
of a tangent bundle).
Thus we can compute $\abs{B_\Z}$ by computing the intersection number of
$s(B)$ with $Z_0$. 
We will use this idea in the proof of
Theorem~\ref{thm:volB=BZ} in~\S\ref{sec:pf-volB=BZ}.

Informally, sections of the integral lattice are ``constant integers,''
while sections of the affine lattice ``have constant slope $-1$''
and intersect the zero section at integer points.
See Figure~\ref{fig:slope}.

\begin{figure}[h]
\centerline{\includegraphics[scale=0.5]{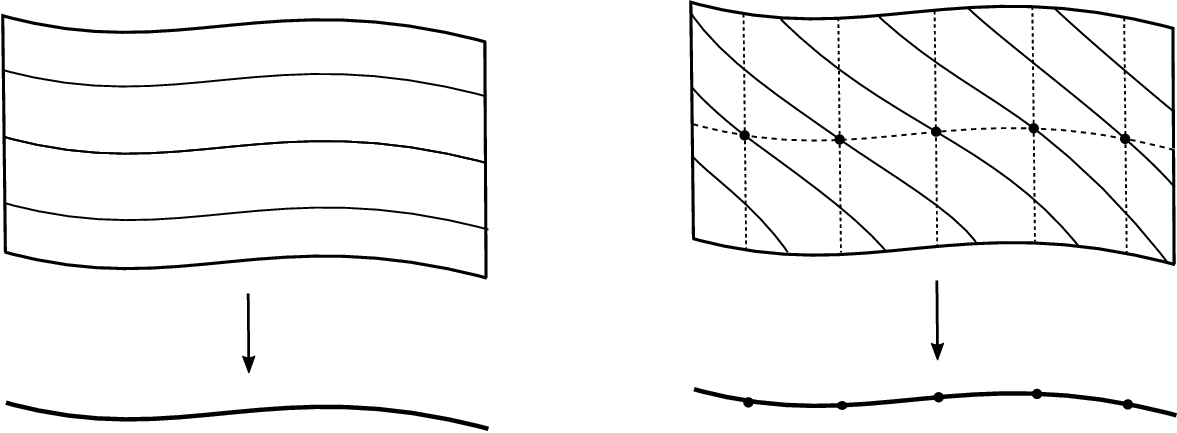}}
\caption{The integral lattice and the affine lattice}
\labell{fig:slope}
\end{figure}

\begin{Remark} \labell{rk:dual-torus}
In this section, given any integral-integral affine manifold $B$,
we constructed its affine lattice $\Lambda^\aff \subset TB$,
and we viewed it as a section of the Lie torus bundle $TB/\Lambda$.
In \S\ref{sec:defns 2}, we started with a prequantized
Lagrangian torus fibration $L \to M \to B$,
and we obtained from it an integral-integral structure on $B$.
When $B$ is obtained in this way,
we can identify the Lie torus bundle $TB/\Lambda$
as the dual torus bundle associated to $M \to B$,
and we can construct its section $\Lambda^\aff$
directly from the prequantization. 
For details, see Section \ref{sec:dual-torus-fibration}.
\eor
\end{Remark}

\section{Local torus actions}
\labell{sec:local torus actions}

A Lagrangian torus fibration $M \to B$ is not quite a principal torus bundle:
its fibres come with free and transitive torus actions,
but there is no one single torus that acts on all of these fibres.
Instead, 
the tori that act on these fibres assemble into a Lie torus bundle.
This Lie torus bundle comes with local trivializations,
which give torus actions on neighbourhoods of fibres of $M$,
and these fit together into a \emph{local torus action}.
In this section we provide details and 
we generalize some standard results on global torus actions
to local torus actions.

\subsection*{Local torus actions} \ 

Fix a Lie torus $T$, namely, a Lie group isomorphic to $(S^1)^n$.

\begin{defn}\labell{defn:local-torus-action}
A \define{$T$-atlas} on a manifold $M$ is a set of 
open subsets of $M$ that cover $M$ and are equipped with $T$-actions,
satisfying the following compatibility condition:
Given two such open subsets $U$ and $V$, 
the intersection $U\cap V$ is invariant under both actions,
and on each of its connected components,
the actions differ by an automorphism of $T$.
Two $T$-atlases are \define{equivalent} if their union is also a $T$-atlas,
namely, if their union satisfies the same compatibility condition. 
A \define{local $T$-action} on $M$ is an equivalence class of $T$-atlases.
\end{defn}

\begin{rmk}
There are several different definitions of ``local torus action'' in the
literature, none of which are exactly the same as ours; 
some others are in~\cite{yoshida-toric} and~\cite{cheeger-gromov},
\cite{gromov}.
\end{rmk}

\begin{rmk}
Our definition of local torus actions, and their resulting properties,
naturally extend to arbitrary compact Lie groups.
In this paper, we only need local torus actions that come from 
Lie torus bundles, as in the following example.
These can also be viewed as Lie groupoid actions.
\end{rmk}

\begin{Example}\labell{lemma:lie-torus-bundle-local-action}
Let $p\colon N \to B$ be a Lie torus bundle
(see \S\ref{sec:lie-torus-bundles})
with fibres isomorphic to $T$.
A local trivialization $p\inv(U) \cong U \cross T$ 
induces a natural action of $T$
on $p\inv(U)$ by acting on the second component of $U\cross T$. 
Because the automorphism group of the Lie torus $T$ is discrete,
for any two such local trivializations,
the $T$ actions on the (connected components of the)
intersection of their domains
differ by an automorphism of $T$.
This equips $N$ with a local $T$-action.
\end{Example}

We say that a $k$-form $\alpha \in \Omega^k(M)$ is \define{invariant}
with respect to the local torus action
if the restriction of $\alpha$ to every element of every $T$-atlas
is $T$-invariant.
(Because being $T$-invariant is preserved under automorphisms of $T$,
it is enough to require the existence of one $T$-atlas
such that restrictions of $\alpha$ to its elements are $T$-invariant.)
Denote by $\Omega^k(M)^{T}$ the space of invariant $k$-forms on~$M$. 

Next, define an ``averaging'' map
$\av\colon \alpha \mapsto \ol{\alpha} \colon
\Omega^k(M) \to \Omega^k(M)^{T}$
as follows.
In any open set $U$ in a T-atlas,
it is defined by integrating over the torus: 
\[  \ol\alpha|_U = \int_{t\in T} t^* \alpha|_U \, d\mu \]
where $d\mu$ is normalized Haar measure on $T$.
This construction agrees on overlaps between different elements of a T-atlas,
since the Haar measure is invariant under $\Aut(T)$,
and so this operation is well-defined over all of $M$.  
Note that $\alpha$ is invariant iff $\bar{\alpha} = \alpha$.

Invariance and averaging are compatible with the exterior derivative:

\begin{lemma} \labell{d of bar}
Let $M$ be a manifold equipped with a local $T$-action.
For any $\alpha \in \Omega^k(M)$, we have $\overline{d\alpha} = d\bar{\alpha}$. 
  Thus, if $\alpha$ is invariant, then $d\alpha$ is invariant,
and if $\alpha$ is closed, then $\bar{\alpha}$ is closed.
\end{lemma}

\begin{proof}
The argument is similar to the known argument for a global $T$ action;
we include it here for completeness.

In a local coordinate chart contained in an element of a $T$-atlas, 
write $t^*\alpha = \sum_I a_I(x,t) dx_I$.
Then 
\[\ol{\alpha} = \sum_I 
\Big( \int_T a_I(x,t) dt \Big) dx_I ,\]
where the integral is with respect to the normalized Haar measure on $T$.
Similarly, 
\[\ol{d\alpha} = \sum_{I,j} 
\Big( \int_T \dd{a_I}{x_j}(x,t) dt \Big)
dx_j \wedge dx_I .\]
So it suffices to show that, for each $I$ and $j$,
\[ 
\int_T \dd{a_I}{x_j}(x,t) \, dt 
 = \dd{}{x_j} \left(\int_T a_I(x,t) \, dt\right) .\]
This, in turn,
follows from Theorem 9.42 in~\cite{rudin}, differentiation under the
integral sign, which requires only continuity of $\dd{a_I}{x_j}$.
Since each point has a neighbourhood where $\ol{d\alpha} = d\ol{\alpha}$,
we are done.

For the second claim, recall 
that $\beta$ is invariant if and only if $\ol{\beta} = \beta$,
and  note that $\ol{0} = 0$. 
\end{proof}

The key fact we will need about local torus actions
is that, when the manifold is compact,
any cohomology class has an invariant representative.
We prove this in Theorem~\ref{thm:invariant-representative}.
We expect this fact to remain true for non-compact manifolds; 
it should follow from the fact that the subspace of exact forms
is closed (say, in the $C^\infty$ topology)
in the space of all differential forms. 
For a \emph{global} action on a non-compact manifold,
an easier proof is given in~\cite[\S9]{onishchik}.

\begin{thm}\labell{thm:invariant-representative}
Let $M$ be a compact manifold equipped with a local $T$ action.
Then, for each closed $k$-form $\alpha$, 
its average $\ol{\alpha}$ is closed,
and the cohomology classes of $\ol{\alpha}$ and $\alpha$ are equal in $H^k(M)$.
In particular, any cohomology class in $H^k(M)$
has a representative in $\Omega^k(M)^{T}$.
\end{thm}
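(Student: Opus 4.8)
The plan is to show that averaging does not change the cohomology class. By Lemma~\ref{d of bar} we already know that $\ol{\alpha}$ is closed whenever $\alpha$ is closed, so the only thing left to prove is that $[\ol{\alpha}] = [\alpha]$ in $H^k(M)$; equivalently, that $\alpha - \ol{\alpha}$ is exact. The natural strategy is to produce an explicit homotopy operator. For a \emph{global} torus action one writes $\ol{\alpha} - \alpha = \int_T (t^*\alpha - \alpha)\, d\mu$ and, using the homotopy between $t^*$ and $\id^*$ coming from a path in $T$, expresses $t^*\alpha - \alpha$ as $d(K_t\alpha) + K_t(d\alpha)$, where $K_t$ is the standard cochain homotopy associated to the flow; since $\alpha$ is closed the second term drops out, and integrating over $T$ gives $\ol{\alpha} - \alpha = d\bigl(\int_T K_t\alpha\, d\mu\bigr)$. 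So globally one has an honest primitive.

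The obstacle is that for a \emph{local} torus action there is no single global map $t^* \colon M \to M$, so this formula has no global meaning: the homotopy operator $K_t$ is only defined chart-by-chart, and the primitives one builds on overlapping charts need not agree. Thus I cannot directly integrate a globally-defined $K_t\alpha$ over $T$. The main work is to patch the local primitives together, and the cleanest way to do this that I expect to work is a Čech--de~Rham (double complex) argument rather than a single explicit primitive. Concretely, I would take a finite good cover $\{U_i\}$ subordinate to a $T$-atlas (finite because $M$ is compact), and on each $U_i$ use the local global-action argument above to write $\alpha|_{U_i} - \ol{\alpha}|_{U_i} = d\eta_i$ for some $\eta_i \in \Omega^{k-1}(U_i)$. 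On overlaps $U_i \cap U_j$ the difference $\eta_i - \eta_j$ is closed, and the failure of the $\eta_i$ to glue is measured by a class in $\check{H}^1$ of the cover with coefficients in the relevant sheaf of closed forms; chasing this through the Čech--de~Rham double complex, whose total cohomology computes $H^k(M)$, should show that the global class $[\alpha - \ol{\alpha}]$ vanishes.

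An alternative, and perhaps more self-contained, route is to build the primitive directly by choosing a partition of unity $\{\rho_i\}$ subordinate to the cover and forming $\eta = \sum_i \rho_i \eta_i$; then $d\eta = \sum_i d\rho_i \wedge \eta_i + \sum_i \rho_i\, d\eta_i$, and while the second sum equals $\alpha - \ol{\alpha}$ (since $\sum \rho_i = 1$), the first sum is the error term one must control. The hard part is precisely this correction $\sum_i d\rho_i \wedge \eta_i$: one must show it is itself exact, which again reduces to understanding the overlap discrepancies $\eta_i - \eta_j$ and feeds back into the same double-complex bookkeeping. Either way, the crux is not the averaging (that is handled by Lemma~\ref{d of bar} and the standard flow-homotopy) but the \emph{globalization}: reconciling the chart-local primitives across overlaps, and the whole point of isolating Theorem~\ref{thm:invariant-representative} is that compactness of $M$ is what makes this gluing finite and hence tractable.
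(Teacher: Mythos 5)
Your reduction of the problem to ``globalization'' is accurate, and your local ingredient (the flow-homotopy argument on a chart with a genuine $T$-action) is fine; the gap is that the \v{C}ech--de Rham ``bookkeeping'' you appeal to is not an argument but a restatement of the claim. There are two readings of your cover, and both fail as described. If the $U_i$ form a \emph{good} cover subordinate to a $T$-atlas, its elements are contractible, hence cannot be invariant, so no torus action restricts to them; the primitives $\eta_i$ with $d\eta_i = (\alpha - \ol{\alpha})|_{U_i}$ then exist by the Poincar\'e lemma alone and carry no information about averaging, and the zig-zag through the double complex merely recomputes the class $[\alpha - \ol{\alpha}]$ as a \v{C}ech class --- nothing forces it to vanish, so the argument is circular. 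If instead the $U_i$ are the invariant atlas elements themselves, your homotopy-operator primitives are meaningful, but the overlaps are no longer contractible, the sheaf of closed $(k-1)$-forms is not acyclic, and the class of the discrepancy cocycle $(\eta_i - \eta_j)$ in $\check{H}^1$ of the cover maps exactly to $[\alpha - \ol{\alpha}]$, so asserting it vanishes is again the original claim. Indeed, with natural choices the discrepancies $\eta_i - \eta_j$ are closed forms that need not be exact (they involve fibre-integration-type terms coming from the different choices of paths in $T$ on the two charts); controlling their cohomology classes on the non-contractible overlaps \emph{is} the whole problem, and neither of your two routes (the double complex or the partition-of-unity correction $\sum_i d\rho_i \wedge \eta_i$) supplies a mechanism for it.

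The missing idea --- the linchpin of the paper's proof --- is that intersections of atlas elements are themselves invariant and carry honest \emph{global} $T$-actions: in particular $\bigl(\cup_{i=1}^\ell U_i\bigr) \cap U_{\ell+1}$ is an invariant open subset of $U_{\ell+1}$, so it inherits the global action of $U_{\ell+1}$, and the global-action theorem (invariant cohomology equals de Rham cohomology, \cite[\S9]{onishchik}) applies to it, not only to the charts. With this input, the paper globalizes not by chasing primitives but by comparing two cohomology theories: it sets $H^k_{\Inv}(M) := H^k\bigl(\Omega^\bullet(M)^{T},d\bigr)$, observes that $\av_*\incl_* = \id$ on $H^k_{\Inv}(M)$, and proves by induction on $\ell$ that $\incl_* \colon H^k_{\Inv}\bigl(\cup_{i=1}^\ell U_i\bigr) \to H^k_{\dR}\bigl(\cup_{i=1}^\ell U_i\bigr)$ is an isomorphism for all $k$, using the commutative diagram of Mayer--Vietoris sequences for the two theories and the Five Lemma, with the base case, the $U_{\ell+1}$ summand, and the intersection term all handled by the global-action theorem. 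Then $\av_*$ is the two-sided inverse of $\incl_*$, so $[\ol{\alpha}] = \incl_*\av_*[\alpha] = [\alpha]$. This induction (or an equivalent spectral-sequence comparison of the invariant and ordinary de Rham complexes over the invariant cover) is precisely the content your proposal leaves unfilled.
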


\begin{proof}
Let $M$ be a compact manifold equipped with a local $T$ action.
Denote $H^k(\Omega^\bullet(M)^{T},d)$ by $H^k_{\Inv}(M)$.
Since the condition of invariance is preserved by the exterior derivative,
i.e., $d\bigl(\Omega^k(M)^{T}\bigr) \subseteq \Omega^{k+1}(M)^{T}$,
the inclusion map $\incl \colon \Omega^k(M)^{T} \into \Omega^k(M)$
induces a map on cohomology,
\[ \incl_*\colon H^k_{\Inv}(M) \to H_{\dR}^k(M). \]
Since the averaging map $\av \colon \alpha \mapsto \ol{\alpha}$ 
also commutes with $d$,
it also induces a map on cohomology
\[ \av_* \colon H_{\dR}^k(M) \to H^k_{\Inv}(M).\]
Note that the composition $\av_* \iota_* $
is the identity map on $H^k_{\Inv}(M).$

Fix a finite $T$ atlas $\{ U_1,\ldots,U_m\}$ on $M$.
We will prove by induction on $\ell$ that the map
\begin{equation}\labell{induction assumption}
\iota_*\colon H_{\Inv}^k\left( \cup_{i=1}^\ell U_i\right)\to
H^k_\dR\left( \cup_{i=1}^\ell U_i ;\R\right)
\end{equation}
is an isomorphism for all $k$. 

The case $\ell=1$ follows from the fact that,
because $T$ is a compact connected Lie group
and $U_1$ is a $T$ manifold,
$H^k_{\dR}(U_1;\R)=H^k\left( \Omega^\bullet (U_1)^T ,d\right)$; 
see~\cite[\S9]{onishchik}.

Next, fixing $1 \leq \ell < m$,
suppose that \eqref{induction assumption} is an isomorphism for all $k$,
and we would like to prove that
$$ \iota_*\colon H_{\Inv}^k\left( \cup_{i=1}^{\ell+1} U_i\right)\to
H^k_\dR\left( \cup_{i=1}^{\ell+1} U_i ;\R\right)$$
is an isomorphism for all $k$.

Consider the following commutative
diagram of Mayer--Vietoris long exact sequences
\[
\xymatrix{ \ar[r]
  & H_{\Inv}^k\Bigl( \bigl(\cup_{i=1}^\ell U_i\bigr)\cup U_{\ell+1}\Bigr)
  \ar[r]\ar[d]^{\iota_*}
  & H_{\Inv}^k\left(\cup_{i=1}^\ell U_i\right)\oplus H_{\Inv}^k(U_{\ell +1})
  \ar[r]\ar[d]^{\iota_*\oplus \iota_*} &
  H_{\Inv}^k\Bigl( \bigl(\cup_{i=1}^\ell U_i\bigr)\cap U_{\ell+1}\Bigr)
  \ar[r]\ar[d]^{\iota_*} & \\
  \ar[r]
  & H^k_{\dR}\Bigl( \bigl(\cup_{i=1}^\ell U_i\bigr)\cup U_{\ell +1} \Bigr)\ar[r] 
  & H^k_{\dR}\left(  \cup_{i=1}^\ell U_i\right)\oplus H^k_{\dR}(U_{\ell +1})\ar[r]
  & H^k_{\dR}\Bigl( \bigl(\cup_{i=1}^\ell U_i\bigr)\cap U_{\ell +1} \Bigr)\ar[r] & 
}
\]
Since each $U_j$ is $T$-invariant, so is the intersection
$\bigl(\cup_{i=1}^\ell U_i\bigr)\cap U_{\ell +1}$.
Because we have an actual $T$ action on this intersection,
\begin{equation}\labell{isomorphism intersections}
\iota_*\colon H_{\Inv}^k\Bigl( \bigl(\cup_{i=1}^\ell U_i\bigr)\cap U_{\ell +1} \Bigr)
\to  H^k_{\dR}\Bigl( \bigl(\cup_{i=1}^\ell U_i\bigr)\cap U_{\ell +1} \Bigr)
\end{equation}
is an isomorphism, giving the right-hand vertical arrow in the above diagram. 
The middle vertical arrow is the map $\iota_*\oplus \iota_*$;
the first $\iota_*$ is an isomorphism by the induction hypothesis,
and the second is an isomorphism because there is a global $T$-action on
$U_{\ell+1}$.
Since these maps are isomorphisms for all $k$, the Five Lemma implies that
the left-hand vertical map 
\[
\iota_*\colon H_{\Inv}^k\Bigl( 
       \bigl(\cup_{i=1}^\ell U_i\bigr) \cup U_{\ell+1}\Bigr)
\to
  H_{\dR}^k\Bigl( 
       \bigl(\cup_{i=1}^\ell U_i\bigr) \cup U_{\ell+1}\Bigr)
\]
is also an isomorphism.

This completes the induction, and proves
$ \incl_*\colon H^k_{\Inv}(M) \to H_{\dR}^k(M)$
is an isomorphism.

Finally, since $\av_*$ is a left inverse of $\iota_*$
and $\iota_*$ is an isomorphism, $\av_*$ is the inverse of $\iota_*$. 
\end{proof}

\section{Proof of ``Downstairs Theorem''} 
\labell{sec:pf-volB=BZ}

The purpose of this section is to prove Theorem~\ref{thm:volB=BZ},
whose statement we now recall:

\begin{thm-nn}[\ref{thm:volB=BZ}, restated]
Let $B$ be a compact integral-integral affine manifold.
Then the volume of $B$ is equal to the number of integral points in $B$:
\[ \vol(B) = \abs{B_\Z}.\]
\end{thm-nn}

Let $B$ be an integral-integral affine manifold.  
After passing to a double covering, if necessary, we fix an orientation of $B$.
(Note that the double covering inherits 
an integral-integral affine structure,
and that if Theorem \ref{thm:volB=BZ} holds for the double covering,
then it holds for the original $B$.)

Let $\Lambda \subset TB$ be the integral lattice.
Then $M^\vee := TB/\Lambda$ is a Lie torus bundle over $B$
(see Proposition~\ref{prop:bundle-of-tori});
let $\pi^\vee\colon M^\vee \to B$ denote the projection.
Let $Z_0 \subset M^\vee$ be induced from the zero section of $TB$.

Given adapted coordinates $x_1,\ldots,x_n, y_1,\ldots,y_n$ on $TB$
as in Proposition~\ref{prop:bundle-of-tori},
taking $y_j$ mod 1 gives what we call ``adapted coordinates'' on $M^\vee$.

\begin{lemma} \labell{exists dyn}
There exists a unique closed $n$-form $dy^n$ on $M^\vee$ such that,
in any adapted coordinates on $M^\vee$ as above, we have
$dy^n = dy_1 \wedge \ldots \wedge dy_n$.
\end{lemma}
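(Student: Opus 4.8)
The plan is to show that the locally defined $n$-form $dy_1 \wedge \ldots \wedge dy_n$ glues into a globally well-defined $n$-form on $M^\vee$ by checking that it is invariant under changes of adapted coordinates. First I would recall that adapted coordinates on $M^\vee$ arise from integral affine charts on $B$: an integral affine chart $\phi_\alpha$ gives base coordinates $x_1,\ldots,x_n$, hence adapted coordinates $y_1,\ldots,y_n$ on the fibres of $TB$ (taken mod $1$ on $M^\vee$). On an overlap, Lemma~\ref{lemma:AL-isomorphisms} (or equivalently the definition of the integral affine structure) tells us that the base transition is $x' = Ax+b$ with $A \in \GL_n(\Z)$, and the induced fibre coordinates transform by the inverse-transpose action, namely $y' = A^{-T} y$ (plus, for the affine case, a shift, but since the $dy_j$ only see the linear part, the shift is irrelevant).

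The key computation is then to verify that $dy_1' \wedge \ldots \wedge dy_n' = dy_1 \wedge \ldots \wedge dy_n$. Writing $y' = A^{-T} y$ (modulo a constant that differentiates away), we get $dy_i' = \sum_k (A^{-T})_{ik}\, dy_k$, so the top wedge picks up the factor $\det(A^{-T}) = (\det A)^{-1}$. Since $A \in \GL_n(\Z)$, we have $\det A = \pm 1$; a priori this gives agreement only up to sign. This is exactly where the orientation fixed at the start of the section (after passing to a double cover) is needed: with $B$ oriented, the integral affine transition matrices $A$ can be taken to have $\det A = +1$, so $\det(A^{-T}) = 1$ and the forms agree on the nose. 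I would state explicitly that the chosen orientation forces $\det A = 1$, which makes the gluing work.

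Granting the matching on overlaps, the form $dy^n$ is well-defined globally by a standard sheaf-theoretic gluing argument: the locally defined forms agree on all pairwise intersections, hence patch together to a unique global $n$-form. Uniqueness is immediate since the form is pinned down in every adapted chart, and closedness is obvious because $d(dy_1 \wedge \ldots \wedge dy_n) = 0$ holds in each chart and $d$ is a local operator.

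The main obstacle I anticipate is the sign issue: without an orientation, the top-degree fibre form $dy_1 \wedge \ldots \wedge dy_n$ is only defined up to sign, since integral affine transitions only guarantee $\det A = \pm 1$. The whole point of the orientation reduction at the start of Section~\ref{sec:pf-volB=BZ} is to eliminate this ambiguity, and I would make sure the proof invokes it clearly. The remaining steps — the transformation law for $dy$, the determinant computation, and the gluing — are all routine once the orientation convention is in place.
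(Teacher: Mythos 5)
Your overall strategy --- check invariance of $dy_1 \wedge \ldots \wedge dy_n$ under changes of adapted coordinates, use the orientation fixed at the start of Section~\ref{sec:pf-volB=BZ} to force $\det A = +1$, and then glue --- is exactly the paper's. But your transformation law for the fibre coordinates is wrong, and the error is not purely cosmetic. The bundle $M^\vee = TB/\Lambda$ is a quotient of the \emph{tangent} bundle, and its adapted fibre coordinates are components of tangent vectors: if $x' = Ax + b$, then $y' = Ay$ --- the Jacobian $A$ itself, with \emph{no} shift, since the derivative of an affine map is its constant linear part (this is how the paper argues, via Proposition~\ref{prop:bundle-of-tori}). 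The law you quote from Lemma~\ref{lemma:AL-isomorphisms}, $t' = A^{-T}t + g(x)$, describes the fibres of the Lagrangian fibration $M \to B$ (equivalently of $T^*B/\Lambda^*$), not of $M^\vee$; you have imported the transformation law of the wrong bundle.

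The $A$-versus-$A^{-T}$ discrepancy happens to be harmless, since $\det(A^{-T}) = (\det A)^{-1} = \det A$ when $\det A = \pm 1$. The shift is where your argument genuinely fails: the shift $g(x)$ in Lemma~\ref{lemma:AL-isomorphisms} is a function of $x$, not a constant, so it does not ``differentiate away.'' One gets
\begin{equation*}
dy_i' = \sum_k (A^{-T})_{ik}\, dy_k + \sum_j \frac{\partial g_i}{\partial x_j}\, dx_j ,
\end{equation*}
and the top wedge $dy_1' \wedge \ldots \wedge dy_n'$ then differs from $\det(A^{-T})\, dy_1 \wedge \ldots \wedge dy_n$ by extra terms involving the $dx_j$. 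With such transitions the local forms would \emph{not} glue --- indeed, on $M$ itself the fibrewise form $dt_1 \wedge \ldots \wedge dt_n$ is not a well-defined global form, for precisely this reason. The lemma is saved only because the transitions of $M^\vee$ are $y' = Ay$ on the nose, with no shift at all; your proof should derive this from the definition of adapted coordinates on $TB$ rather than from Lemma~\ref{lemma:AL-isomorphisms}, and then your determinant-plus-orientation argument and the gluing, uniqueness, and closedness remarks go through as in the paper.
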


\begin{proof}
  Given two different sets $(x_1,\ldots,x_n, y_1,\ldots,y_n)$
  and $(x_1',\ldots,x_n', y_1',\ldots,y_n')$ 
  of coordinates on $M^\vee$ as above, 
the $x$ and $x'$ coordinates will locally differ by 
$x' = Ax + b$ with $A \in \GL_n(\Z)$ and $b \in \R^n$, 
and the $y$ coordinates will locally differ by $y' = Ay$
with the same $A \in \GL_n(\Z)$.
Because the $x$ and $x'$ coordinates are oriented,
the determinant of $A$ is positive, hence it is $1$.
It follows that
$dy_1 \wedge \ldots \wedge dy_n = dy_1' \wedge \ldots \wedge dy_n'$.
\end{proof}

We orient $M^\vee$ such that 
every set of coordinates $x_1,\ldots,x_n,y_1,\ldots,y_n$ as above is oriented.

\begin{Proposition} \labell{p:poincare}
The cohomology class $[dy^n]$ is the Poincar\'e dual 
of the submanifold $Z_0$. 
\end{Proposition}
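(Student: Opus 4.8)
The plan is to verify the defining property of the Poincaré dual directly, by pairing against closed forms and then exploiting the local torus action to reduce to invariant representatives. Since $M^\vee$ is compact (both its base $B$ and its torus fibres are compact) and has dimension $2n$, while $Z_0$ is an oriented closed submanifold of dimension $n$, the claim $[dy^n] = \PD(Z_0)$ amounts to showing that
\[ \int_{M^\vee} \omega \wedge dy^n = \int_{Z_0} \omega \]
for every closed $\omega \in \Omega^n(M^\vee)$, where $Z_0$ carries the orientation making $\pi^\vee|_{Z_0}\colon Z_0 \to B$ orientation-preserving. First I would observe that both sides depend only on the de Rham class of $\omega$: the left side because $dy^n$ is closed (Lemma~\ref{exists dyn}) and $M^\vee$ is a closed manifold, so $\int_{M^\vee} d\eta \wedge dy^n = \int_{M^\vee} d(\eta \wedge dy^n) = 0$ by Stokes; the right side because $Z_0$ has no boundary.

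Next I would invoke the local torus action on $M^\vee = TB/\Lambda$ coming from its Lie torus bundle structure (Example~\ref{lemma:lie-torus-bundle-local-action}). By Theorem~\ref{thm:invariant-representative}, every class in $H^n(M^\vee)$ has a $T$-invariant representative, so it suffices to verify the displayed identity for $T$-invariant closed $\omega$. The role of invariance is as follows. In adapted coordinates the transition maps act by $x' = Ax + b$, $y' = Ay$ with $A \in \GL_n(\Z)$ and $\det A = 1$, so they preserve the splitting of forms into horizontal (pure-$dx$) and vertical (pure-$dy$) parts; hence the ``top horizontal'' component, which I write locally as $f\, dx_1 \wedge \cdots \wedge dx_n$, is globally well-defined, and $T$-invariance of $\omega$ means precisely that its coefficient $f$ is independent of the fibre coordinates $y$.

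Then I would reduce each integral to this horizontal component. Every term of $\omega$ containing some $dy_j$ contributes neither to $\omega \wedge dy^n$ (the repeated factor kills it) nor to $\iota_{Z_0}^* \omega$ (since $dy_j$ vanishes on the section $Z_0 = \{y = 0\}$), so both integrals see only $f\, dx_1 \wedge \cdots \wedge dx_n$. With the orientation of $M^\vee$ given by $dx_1 \wedge \cdots \wedge dx_n \wedge dy_1 \wedge \cdots \wedge dy_n$, Fubini together with the unit fibre volume $\int dy_1 \cdots dy_n = 1$ gives $\int_{M^\vee} \omega \wedge dy^n = \int_B f\, dx_1 \cdots dx_n$, while $\int_{Z_0} \omega = \int_B f\, dx_1 \cdots dx_n$ under $Z_0 \cong B$; here invariance ($f = f(x)$) is exactly what makes the fibre-average on the left agree with the value along $y = 0$ on the right. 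The two sides coincide.

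The main obstacle is the passage to an invariant representative. For a general closed $n$-form the left and right integrals reduce to $\int_B \bar f\, dx$ (the fibre average) and $\int_B f(x,0)\, dx$ respectively, which differ as numbers, so a naive fibrewise computation fails. It is precisely Theorem~\ref{thm:invariant-representative} that lets us choose $\omega$ with $f$ independent of $y$, collapsing both expressions to the same integral over $B$; the remaining work is the routine bookkeeping of the horizontal/vertical bigrading and of orientations, which I would not belabour.
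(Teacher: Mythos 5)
Your proposal is correct and follows essentially the same route as the paper's proof: reduce to a $T$-invariant representative via the local torus action and Theorem~\ref{thm:invariant-representative}, observe that any term involving some $dy_j$ contributes neither to $\omega \wedge dy^n$ nor to the pullback to $Z_0$, and conclude by Fubini with unit fibre volume. The only organizational difference is the globalization step: the paper localizes by splitting $\alpha$ with a partition of unity $(\pi^\vee)^*\rho_k$ pulled back from $B$, whereas you note that the transition maps preserve the horizontal/vertical bigrading with $\det A = 1$, so the top-horizontal component is a globally defined (basic) form and the fibre integration can be done globally --- both are valid.
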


\begin{proof}
We need to show that, given any closed $n$-form $\alpha$ on $M^\vee$,
\begin{equation}\labell{eq:condition-for-PD}
  \int_{Z_0} \alpha = \int_{M^\vee} \alpha \wedge dy^n .
\end{equation}

As in Example~\ref{lemma:lie-torus-bundle-local-action},
the Lie torus bundle $M^\vee$ has a local $T$-action,
which in coordinates as above is given by translations
on the $y_j$ (mod 1) coordinates.
By Theorem~\ref{thm:invariant-representative}, 
we may assume that $\alpha$ is invariant under this local torus action.

Let $\{ U_k\}$ be an open cover of $B$
such that on each $U_k$ there exists an integral affine chart,
and let $\{\rho_k\}$ be a partition of unity subordinate to this cover.
Let $\alpha_k := (\pi^\vee)^* \rho_k \cdot \alpha$; then 
\[
\int_{Z_0} \alpha = \sum_k \int_{Z_0} \alpha_k
\qquad \text{ and } \qquad
\int_{M^\vee} \alpha \wedge dy^n
= \sum_k \int_{M^\vee} \alpha_k \wedge dy^n. \]
The $n$-forms $\alpha_k$ are still invariant 
(they might no longer be closed, but this will end up not creating
a problem), 
so it remains to show that \eqref{eq:condition-for-PD}
holds when $\alpha$ is replaced by the invariant $n$-form $\alpha_k$.

In adapted coordinates over $U_k$, the invariant $n$-form $\alpha_k$
is a sum of terms of the form
$c(x)\, dx_{i_1} \wedge \ldots \wedge dx_{i_r}
\wedge dy_{j_1} \wedge \ldots \wedge dy_{j_s}$ with $r+s = n$.
Taking its wedge with $dy^n$, all terms with $s>0$  vanish,
and we obtain
\[ \alpha_k \wedge dy^n = c_k(x)\, dx_1 \wedge \ldots \wedge dx_n
\wedge dy_1 \wedge \ldots \wedge dy_n \]
for some smooth $c_k$ depending only on $x$.

In these same coordinates, the submanifold $Z_0$ is parametrized by the map 
$$ s_0 \colon U_k \to Z_0 \quad , \quad x \mapsto (x,[0]) ,$$ 
and the pullback of $\alpha_k$ by this map is given by 
$$ s_0^* \alpha_k = c_k(x)\, dx_1 \wedge \cdots \wedge dx_n $$ 
with the same coefficient $c_k(x)$.  So
\begin{align*}
\int_{M^\vee} \alpha_k \wedge dy^n
&= \int_{U_k \times (\R/\Z)^n}  c_k(x)\,
  dx_1 \wedge \ldots \wedge dx_n \wedge dy_1 \wedge \cdots \wedge dy_n\\
&= \left(\int_{U_k} c_k(x) \, dx_1\wedge \cdots \wedge dx_n \right)
  \left(\int_{ (\R/\Z)^n } dy_1\wedge\cdots\wedge dy_n \right) \\
&= \int_{U_k} c_k(x) \, dx_1\wedge \cdots \wedge dx_n  \\
&= \int_{Z_0} \alpha_k, 
\end{align*}
as required.
\end{proof}

Finally, recall from Section~\ref{sec:lie-torus-bundles}
that $M^\vee$ comes with a section $D \subset M^\vee$ 
that in local coordinates as above
is given by $D = \{ (x,[-x]) \} $.
The orientation of $B$ induces orientations of $Z_0$ and of $D$, 
and as noted in Section~\ref{sec:lie-torus-bundles},
$D$ intersects $Z_0$ above $B_\Z$, so
$\lvert D \cap Z_0\rvert = \lvert B_\Z \rvert$.
The intersection number of $Z_0$ with $D$ 
is then $(-1)^n \lvert D \cap Z_0\rvert$.

The pullback of $dy^n$ by the map $s(x) = (x,[-x])$
that parametrizes $D$ 
is $(-1)^n dx_1\wedge \ldots \wedge dx_n$.
We then have
\begin{multline*}
\abs{B_\Z} = (-1)^n \big( [D] \cup [Z_0] \big) = (-1)^n \int_{D} \PD(Z_0) 
 = (-1)^n \int_D dy^n
 \\ 
 = \int_B (-1)^n s^* dy^n = \int_B dx_1 \wedge \cdots \wedge dx_n = \vol(B).
\end{multline*}
This completes the proof of Theorem~\ref{thm:volB=BZ}.

\bigskip

\section{Dual torus fibrations} 
\labell{sec:dual-torus-fibration}

In this section we collect some material that 
we don't strictly need for our proof, and that is well-known, 
but that may help clarify the geometric context of the structures
that we have been considering.

In the proof of our Downstairs Theorem in \S\ref{sec:pf-volB=BZ},
a key ingredient was a section of $TB/\Lambda$, constructed from the
integral-integral affine structure, whose intersections with the zero section
are the integer points of $B$.
From the ``upstairs'' viewpoint,
this section is determined by the prequantization line bundle,
and its intersections with the zero-section are the Bohr--Sommerfeld points.  
In this section we briefly explain how to define the section directly
from the line bundle, and how it corresponds with the construction via
integral-integral affine structures.  

A \define{flat circle bundle} is a principal circle bundle
equipped with a flat connection.
For any manifold $N$,
there is a natural bijection between 
the set of equivalence classes of flat circle bundles over $N$
and the set of isomorphism classes of
complex Hermitian line bundles over $N$ equipped with flat Hermitian 
connections.

For any smooth torus\footnote{This works for any compact connected manifold,
but we don't need this generality, and the term ``dual torus'' doesn't
sound right when we don't start with a torus.} 
$N$, we denote by $N^\vee$
the set of equivalence classes of flat circle bundles over~$N$.
This set is naturally a Lie torus, e.g., by using the holonomy 
to identify $N^\vee$ with $\Hom(\pi_1(N),S^1)$ (for any choice of basepoint).
We will call $N^\vee$ the \define{dual torus} to $N$.
Any diffeomorphism $N_1 \cong N_2$ 
naturally induces an isomorphism of Lie tori $N_1^\vee \cong N_2^\vee$.

For any torus $T$ with Lie algebra $\Lt$,
the exponential map $\Lt \to T$ is a group homomorphism,
the \emph{integral lattice} $\Lt_\Z$ is its kernel, 
and we obtain identifications $T = \Lt/\Lt_Z$ and $\pi_1(T)=\Lt_\Z$.
The dual lattice $\Hom(\Lt_\Z,\Z)$ embeds in $\Lt^* = \Hom(\Lt,\R)$ 
through tensoring with $\R$, yielding the \emph{weight lattice} 
$\Lt_\Z^* \subset \Lt^*$.
Writing elements of $S^1$ as $e^{2\pi i t}$, 
we obtain an identification $S^1=\R/\Z$, which yields
$\Lt_\Z = \Hom(S^1,T)$ and $\Lt_\Z^* = \Hom(T,S^1)$.

With these identifications,
$T^\vee = \Hom(\pi_1(T),S^1) = \Lt^*/\Lt_\Z^*$.\footnote{
Often in the literature, the phrase \emph{dual torus} 
is used only in the case that $N=T = \Lt/\Lt_\Z$
and is defined as $T^\vee=\Lt^*/\Lt_\Z^*$.} 
For any $T$-torsor\footnote
{A $T$-torsor is a smooth manifold equipped with a free and transitive 
$T$-action. Such a manifold is in particular a smooth torus.}
$N$, the natural identification of $\pi_1(N)$ with $\pi_1(T)$
yields $N^\vee = T^\vee = \Lt^*/\Lt_\Z^*$.

For the standard torus $\T^n=(S^1)^n$, 
and with the standard identification $\R^n = (\R^n)^*$,
we obtain the identifications $(\T^n)^\vee = \R^n/\Z^n = \T^n$.
In this identification, the element $(a_1,\ldots,a_n)$ of $\T^n$
on the right hand side
corresponds to the element of $(\T^n)^\vee$ on the left hand side
that is represented by a flat connection 
whose holonomony along each loop $\gamma_j$ is $a_j$,
where $\gamma_j$ is the loop in the $t_j$ coordinate.

Let $ \pi \colon M \to B$ be a (smooth) torus fibration.
As a set, the \define{dual torus fibration $M^\vee \to B$}
consists of the disjoint union of the dual tori;
namely, for each $x \in B$, the fibre $(M^\vee)_x$
is the set of equivalence classes of flat circle bundles over the fibre $M_x$.
The local trivializations $M|_U \cong U \times \T^n$
induce a manifold structure on $M^\vee$ and local trivializations 
$M^\vee|_U \cong U \times \T^n$,
making $M^\vee$ into a Lie torus fibration.

Given a Lagrangian torus fibration $M \to B$
and a prequantization line bundle $L \to M$,
the restriction of $L$ to each Lagrangian torus fibre $M_x$ is flat,
thus it determines an element of the corresponding dual torus,
which is trivial if and only if $M_x$ is a Bohr--Sommerfeld fibre
(by definition).
Thus, $L$ induces a section 
$$s_L \colon B\to M^\vee$$
of the dual torus bundle $M^\vee$,
which intersects the zero section exactly above the Bohr--Sommerfeld points.
Up to sign, this section coincides with the 
affine lattice that we introduced
in Section \ref{sec:lie-torus-bundles}, in a sense that we will now spell out.

\begin{Proposition} \labell{prop:identify lattices}
\begin{enumerate}
\item \labell{part1} 
Given a Lagrangian torus fibration $M \to B$,
the dual fibration $M^\vee$ is naturally isomorphic as a Lie torus bundle
to $TB/\Lambda$, where $\Lambda$ is the integral lattice arising from
the induced integral affine structure on $B$ 
(as in \S\ref{sec:lie-torus-bundles}).

\item \labell{part2} 
Let $L\to M\to B$ be a prequantized Lagrangian torus fibration,
and let $\Lambda^{\text{aff}}$ be the affine lattice arising from the induced 
integral-integral affine structure on $B$
(as in \S\ref{sec:lie-torus-bundles}).
Then the isomorphism $M^\vee \cong TB/\Lambda$ of part \eqref{part1}
identifies $s_L$ with $-\Lambda^{\text{aff}}$ viewed as a section
of $TB/\Lambda$.
\end{enumerate}
\end{Proposition}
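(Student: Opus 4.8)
The plan is to prove the two parts in turn: part~\eqref{part1} supplies a chart-free isomorphism, and part~\eqref{part2} then reads off both sections in adapted coordinates and compares them.

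For part~\eqref{part1}, I would first give the isomorphism without reference to charts. The point is that, for a Lagrangian torus fibration, the fundamental group $\pi_1(M_x)$ of each fibre is canonically identified with the period lattice $\Lambda^*_x \subset T^*_xB$: each loop in $M_x$ is the time-one orbit of a vertical vector field, and under the isomorphism $V \cong \pi^* T^*B$, $u \mapsto u \contract \omega$, of Lemma~\ref{lemma:RR=volM}, its generator corresponds to an element of $\Lambda^*_x$. Dualizing,
\[ (M_x)^\vee = \Hom(\pi_1(M_x), \R/\Z) \cong \Hom(\Lambda^*_x, \R/\Z) \cong T_xB/\Lambda_x, \]
using $(\Lambda^*_x)^* = \Lambda_x$; letting $x$ vary gives a map $M^\vee \to TB/\Lambda$, natural because the identification $\pi_1(M_x)\cong\Lambda^*_x$ is. To check it is a Lie torus bundle isomorphism, I would unwind it in an Arnol'd--Liouville chart $M|_U \cong \Omega \times \T^n$: there it becomes the identity $U \times \T^n \to U \times \T^n$ under the identifications of Section~\ref{sec:dual-torus-fibration} and the adapted coordinates on $TB/\Lambda$, and the two systems of transition functions agree, both being the same $A \in \GL_n(\Z)$ of Lemma~\ref{lemma:AL-isomorphisms} --- as one sees from the homology relation $\gamma_j \sim \sum_k (A\inv)_{jk}[\gamma'_k]$ computed in the proof of Lemma~\ref{lemma:enhanced-AL-isom}.

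For part~\eqref{part2}, I would evaluate $s_L$ in an enhanced Arnol'd--Liouville chart. By definition $s_L(x)$ is the class of the flat bundle $L|_{M_x}$, whose holonomy around the cycle $\gamma_k$ is $e^{2\pi i x_k}$ by Lemma~\ref{lemma:hol-enhanced-AL}. Under the identification of Section~\ref{sec:dual-torus-fibration} sending a flat connection to its tuple of holonomy exponents, and hence under the isomorphism of part~\eqref{part1}, this exhibits $s_L$ as the section $x \mapsto (x,[x])$ in adapted coordinates. The affine lattice, viewed as a section of $TB/\Lambda$, is given by formula~\eqref{formula for s}, namely $s(x) = (x,[-x])$, so $-\Lambda^{\text{aff}}$ is the section $x\mapsto (x,[x])$; comparing the two gives $s_L = -\Lambda^{\text{aff}}$.

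The main obstacle will be the sign-and-transpose bookkeeping. Concretely, one must pin down the canonical identification $\pi_1(M_x)\cong\Lambda^*_x$ --- including its sign --- so that the coordinate form of the part~\eqref{part1} isomorphism is exactly the identity (rather than an inverse-transpose or a sign flip), and so that the holonomy computation in part~\eqref{part2} produces $-\Lambda^{\text{aff}}$ with precisely the stated sign. Once these conventions are fixed compatibly with those of Sections~\ref{sec:defns 2} and~\ref{sec:lie-torus-bundles}, the remaining verifications --- smoothness of the bundle map, agreement on overlaps, and the two coordinate computations --- are routine.
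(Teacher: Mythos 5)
Your proposal is correct and takes essentially the same route as the paper: part~\eqref{part1} via the Hamiltonian-flow picture that makes $M_x$ a torsor with $\pi_1(M_x)$ identified with the period lattice $\Lambda^*_x$ (hence $M_x^\vee \cong T_xB/\Lambda_x$), and part~\eqref{part2} by computing $s_L$ in an enhanced Arnol'd--Liouville chart via Lemma~\ref{lemma:hol-enhanced-AL} and comparing with the coordinate formula~\eqref{formula for s} for $\Lambda^\aff$. The paper's own argument is likewise only a sketch, and your chart-compatibility check for part~\eqref{part1} and attention to the sign conventions match or slightly exceed its level of detail.
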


\begin{proof}[Sketch of proof]
Part \eqref{part1} is obtained from the proof of the Arnol'd--Liouville theorem.
Namely, by taking time-1 Hamiltonian flows of collective functions
(pullbacks to $M$ of smooth functions on $B$), we obtain for each $x \in B$
an action of $T_x^*B$ on the fibre $M_x$. 
This action has stabilizer $\Lambda^*_x$, so makes $M_x$
into a $T_x^*B/\Lambda^*_x$-torsor.
Therefore we can identify $M_x^\vee$ with the torus 
dual to $T_x^*B/\Lambda^*_x$, namely, with $T_xB/\Lambda_x$.

Part \eqref{part2} then follows from the coordinate expression of $s_L$ 
in an enhanced Arnol'd--Liouville chart.  Here are some details:
By ``enhanced Arnol'd--Liouville'' (Theorem~\ref{l:enhancedAL}),
we may assume that $L = \Omega \times \T^n \times \C$,
with coordinates $(x_1,\ldots,x_n,t_1,\ldots,t_n,z)$ with $t_j$ taken mod $1$.
In particular, we identify $M = \Omega \times \T^n$ and $B = \Omega$.
From these identifications, 
we obtain $M^\vee = \Omega \times (\T^n)^\vee$
and 
$TB = \Omega \times \R^n$ and $\Lambda = \Omega \times \Z^n$,
so $TB/\Lambda = \Omega \times \T^n$.
Composing with our identification $(\T^n)^\vee = \T^n$,
we obtain the identification $TB/\Lambda=M^\vee$ of Part~\ref{part1}.
Finally, by Lemma~\ref{lemma:hol-enhanced-AL},
the holonomy along $\gamma_j$
of the connection in the enhanced Arnol'd--Liouville chart
is $e^{2\pi i x_j}$;
using our identification $(\T^n)^\vee = \T^n$,
we obtain $s_L(x_1,\ldots,x_n) = 
  (x_1,\ldots,x_n,e^{2\pi i x_1},\ldots,e^{2\pi i x_n})$,
also written as $(x,[x])$, which is the negative 
of the expression
in coordinates for the section $\Lambda^\aff$ \eqref{formula for s}.
\end{proof}

\section{Comments on the literature}
\labell{sec:literature}

The results and ideas that we present in this paper are in principle
known to experts.
In particular, our Upstairs Theorem
follows from a more general theorem of Fujita, Furuta, and Yoshida,
and appears as Corollary~6.12 in their paper~\cite{fujita-furuta-yoshida}.
They, in turn, attribute this result to Andersen's paper~\cite{andersen};
however, Andersen's paper does not explicitly contain this result.
Corollary~4.1 of Andersen's paper \cite{andersen} is similar,
and its proof contains ingredients that are similar to ours,
but this proof lacks detail.

Specifically, 
Andersen's Corollary~4.1 should follow from the proof of his Theorem~4.1.
In the proof of his Theorem~4.1, 
Andersen describes an $n$-form $\alpha$ on a torus bundle
(which is the same as the $n$-form $dy^n$ on $M^\vee$ in our 
Lemma~\ref{exists dyn}),
and writes: ``It is clear that $\alpha$ is the Poincar\'e dual
of the zero section''.  To us, this was not ``clear;'' 
this result is our Proposition~\ref{p:poincare},
whose proof relies 
on our nontrivial Theorem~\ref{thm:invariant-representative}.

Andersen \cite{andersen} also works with the half-form correction 
and we don't (yet), but this difference should be minor.
Finally, Andersen works with so-called mixed polarizations, of which 
our Lagrangian fibrations are a very special case;
it would be interesting to extend our work
to Andersen's more general setup.

Our proof of Theorem~\ref{thm:volB=BZ} is inspired by 
Gross's ``Mirror Riemann--Roch conjecture'' \cite[Conjecture 4.9]{gross}.
In the presence of a (special) Lagrangian torus fibration
and a (``permissible'') dual fibration,
this conjecture expresses the Euler characteristic
of a holomorphic line bundle over one of the fibrations
as an intersection number of sections of the other fibration, 
up to sign.

Similar ideas also appear in Tyurin's paper \cite{tyurin}.
Tyurin works with a singular Lagrangian fibration 
of a compact K\"aler manifold $M$,
with a prequantization line bundle $M \to L$.
He refers to the question on whether 
the dimension of the space of holomorphic sections 
equals the number of Bohr--Sommerfeld fibres for $L^{\otimes k}$
as the ``numerical quantization problem'' \cite[p.~6]{tyurin}.
(About this dimension, 
see our discussion in Section~\ref{sec:defns} under ``Riemann--Roch numbers''.)
He sketches a general argument for attacking this problem
by realizing the Bohr--Sommerfeld set as the intersection of two sections 
of a dual fibration.
(The proof of the ``Downstairs Theorem'' in our Section~\ref{sec:pf-volB=BZ}
uses a similar idea.)
Tyurin applies this general method to the cases of
elliptic curves (Prop 1.4 and Cor 1.1), K3 surfaces (Theorem 2.2),
and ``conditionally'' to Calabi--Yau 3-folds (Prop 3.2 and Cor 3.1; 
see the first paragraph of~\S3 for a discussion of the ``conditionality''). 
He acknowledges that ``these constructions have been developed and
used by a number of physicists and mathematicians.''

Tyurin works with K\"ahler manifolds, and allows the polarizations to have
nodal (``focus-focus'') singularities (see (2.54) on p.\ 30).
In our setting, we do not allow singularities in our polarizations;
on the other hand, integrability of almost complex structures is not assumed.
In particular, our proof applies to the Kodaira--Thurston manifold,
but the argument in~\cite{tyurin} doesn't.
It would be interesting to extend our work to allow focus-focus singularities.

\bigskip \bigskip \bigskip


{\ifdebug {\newpage} \else {\end{document}} \fi}


\begin{thebibliography}{KSW}

\bibitem[And]{andersen} 
  J\o{}rgen E.\ Andersen, \emph{Geometric quantization of symplectic manifolds
with respect to reducible non-negative polarizations},
Commun.\ Math.\ Phys.\ \textbf{183}, 401--421 (1997).

\bibitem[Arn2]{arnold:book}
V.I.\ Arnol'd, \emph{Mathematical methods of classical mechanics}, 
Second Edition, Springer, New York, 1989.

\bibitem[CG]{cheeger-gromov} 
Cheeger and Gromov,
\emph{Collapsing Riemannian manifolds while keeping their curvature bounded I,}
{J. Diff Geom.} \textbf{23} (1986), 309--346.
  
\bibitem[D1]{duistermaat}
J.\ J.\ Duistermaat, \emph{The heat kernel Lefschetz fixed point formula
for the spin-c Dirac operator}, Birkh\"auser, Boston, 2011.

\bibitem[D2]{duist-act-ang}
J.\ J.\ Duistermaat,
\emph{On Global Action-Angle Coordinates,}
{Comm.\ Pure Appl.\ Math.} {\bf 33} (1980), no.\ 6, 687--706

\bibitem[FM]{fernandes-mol}
Rui Loja Fernandes and Maarten Mol,
\emph{K\"ahler metrics and toric Lagrangian fibrations,}
arXiv:2401.02910 [math.DG]

\bibitem[FFY]{fujita-furuta-yoshida}
Hajime Fujita, Mikio Furuta, and Takahiko Yoshida,
\emph{Torus fibrations and localization of index I -- polarization
and acyclic fibrations},
J.\ Math.\ Sci.\ Univ.\ Tokyo \textbf{17} (2010), 1--26.

\bibitem[GGK]{GGK}
Viktor Ginzburg, Victor Guillemin, and Yael Karshon 
\emph{Moment Maps, Cobordisms, and Hamiltonian Group Actions,}
Mathematical Surveys and Monographs, {\bf 98},  
American Mathematical Society, Providence, RI, 2002.

\bibitem[Go]{goldman}
William M.\ Goldman, \emph{Geometric structures on manifolds},
Graduate Studies in Mathematics \textbf{227},
Amer.\ Math.\ Soc., 2022.

\bibitem[Gm]{gromov}
Gromov, \emph{Volume and bounded cohomology,}
{Publ. IHES} \textbf{56} (1983), 213--307

\bibitem[Gr]{gross}
Mark Gross,
\emph{Special Lagrangian Fibrations I: Topology.}
Integrable systems and algebraic geometry (Kobe/Kyoto, 1997), 
156-193, World Sci. Publ., River Edge, NJ, 1998. 

\bibitem[GS]{guil-st-gel-cet} V. Guillemin and S. Sternberg, 
\emph{The Gel'fand--Cetlin system and quantization of the complex flag manifolds,}
{J. Funct.\ Anal.} {\bf 52} (1983), no.\ 1, 106--128

\bibitem[H]{hamilton-monod} M.\ D.\ Hamilton, 
\emph{Classical and quantum monodromy via action-angle variables,}
{Journal of Geometry and Physics,} {\bf 115} (May 2017), 37--44

\bibitem[HM]{mark-zoe} M. D. Hamilton and Zoe McIntyre,
\emph{Quantum monodromy in the Kodaira-Thurston manifold,} in progress. 

\bibitem[K]{Kost} B.\ Kostant, 
\emph{Quantization and unitary representations. I. Prequantization,}
{Lectures in modern analysis and applications, III,}
Lecture Notes in Mathematics, {\bf 170}, pp. 87--208

\bibitem[KSW]{KSW}
Yael Karshon, Shlomo Sternberg, and Jonathan Weitsman,
\emph{Exact Euler-Maclaurin formulas for simple polytopes}, 
Adv.\ Appl.\ Math.\ \textbf{39} (2007), 1--50.

\bibitem[MM]{markus-meyer}
L.\ Markus and K.R.\ Meyer, \emph{Generic Hamiltonian systems
are neither integrable nor ergodic}, 
Mem.\ A.M.S.\ \textbf{144}, 1974.

\bibitem[Mo]{mol}
Maarten Mol, \emph{On the classification of multiplicity-free
Hamiltonian actions by regular proper symplectic groupoids},
arXiv:2401:00570 [math.SG]

\bibitem[O]{onishchik}
Arkadi L.\ Onishchik, 
\emph{Topology of transitive transformation groups},
Barth Verlagsgesellschaft, Barth 1994

\bibitem[R]{rudin}
W.\ Rudin, \emph{Principles of Mathematical Analysis,} third edition,
McGraw-Hill, 1976  

\bibitem[Se]{sepe} Daniele Sepe,
\emph{Topological classification of Lagrangian fibrations,}
{Journal of Geometry and Physics} \textbf{60}, 341--351 (2010)

\bibitem[Sn]{sniatycki}
Jedrzej \'Sniatycki, 
\emph{On cohomology groups appearing in geometric quantization},
in: Lecture Notes in Math.\ \textbf{570}, Springer-Verlag, Berlin, 1977, pp.~46--66.

\bibitem[T]{tyurin} Andrei Tyurin,
\emph{Geometric quantization and mirror symmetry,} arXiv:math/9902027v1

\bibitem[W]{Wood} N.M.J.\ Woodhouse, 
\emph{Geometric Quantization,} 2nd edition, Clarendon Press, Oxford, 1991

\bibitem[Y]{yoshida-toric}
Takahiko Yoshida, 
\emph{Local torus actions modeled on the standard representation,}
{Advances in Mathematics,} \textbf{227} 5 (Aug 2011),  1914--1955.

\end{thebibliography}
\end{document}
